\definecolor{mylinkcolor}{RGB}{0,0,130}
\providecommand{\keywords}[1]{\textbf{\textit{Keywords ---}} #1}
\providecommand{\msc}[1]{\textbf{\textit{MSC 2010 ---}} #1}
\providecommand{\acknowledgements}[1]{\textbf{\textit{Acknowledgements ---}} #1}
\newtheorem{mydef}{Definition}
\newtheorem{myrem}{Remark}
\newtheorem{mythm}{Theorem}
\newtheorem{myprop}{Proposition}
\DeclarePairedDelimiter{\norm}{\lVert}{\rVert}
\DeclarePairedDelimiter{\abs}{|}{|}
\crefname{secname}{Section}{Section}
\Crefname{secname}{Sec.}{Sec.}
\crefname{section}{Section}{Sections}
\crefname{subsection}{Subsection}{Subsections}
\newcommand{\R}{\mathbb{R}}
\newcommand{\N}{\mathbb{N}}
\newcommand{\J}{\mathbb{J}}
\newcommand{\cA}{\mathcal{A}}
\newcommand{\cAE}[2]{\cA_{#1}^{#2}}
\newcommand{\bcAE}[2]{\mathbf{\cA_\textbf{#1}^{#2}}}
\newcommand{\cAEs}[1]{\cAE{s,0}{#1}}
\newcommand{\bcAEs}[1]{\bcAE{s,0}{#1}}
\newcommand{\dVPOD}[1]{{V}_\textnormal{POD}^{#1}}
\newcommand{\dVCL}[1]{{V}_\textnormal{CL}^{#1}}
\newcommand{\cI}{\mathcal{I}}
\newcommand{\cM}{\mathcal{M}}
\newcommand{\cP}{\mathcal{P}}
\newcommand{\cH}{\mathcal{H}}
\newcommand{\cN}{\mathcal{N}}
\newcommand{\bfa}{\bm{a}}
\newcommand{\bfx}{\bm{x}}
\newcommand{\tcM}{\widetilde{\mathcal{M}}}
\newcommand{\tbfx}{\widetilde{\bfx}}
\newcommand{\tbfw}{\widetilde{\bfw}}
\newcommand{\tbfr}{\widetilde{\bfr}}
\newcommand{\bfr}{\bm{r}}
\newcommand{\bfv}{\bm{v}}
\newcommand{\bfw}{\bm{w}}
\newcommand{\bfzero}{\bm{0}}
\newcommand{\bfmu}{\ensuremath{\bm{\mu}}}
\newcommand{\Jac}[1]{\ensuremath{\bm{D}_{\bfx_{r}}d(#1)}}
\newcommand{\Jacx}[1]{\ensuremath{\bm{D}_{\bfx}d(#1)}}
\newcommand{\JacC}{\bm{D}}
\newcommand{\lb}{\left(}
\newcommand{\rb}{\right)}
\newcommand{\lsb}{\left[}
\newcommand{\rsb}{\right]}
\newcommand{\textd}{\text{d}}
\newcommand{\ddvar}[1]{\frac{\textd}{{\textd}{#1}}}
\newcommand{\ddt}{\ddvar{t}}
\newcommand{\rT}[1]{\ensuremath{#1^{\textnormal{\textsf{T}}}}}
\newcommand{\rTb}[1]{\ensuremath{\rT{\lb#1\rb}}}
\newcommand{\rTsb}[1]{\ensuremath{\rT{\lsb#1\rsb}}}
\newcommand{\sI}[1]{\ensuremath{\lb#1\rb^{+}}}
\newcommand{\eproj}{e_\text{proj}}
\newcommand{\ered}{e_\text{red}}
\newcommand{\esymp}{e_\text{symp}}
\newcommand{\bfxref}{\bfx_\text{ref}}
\newcommand{\aeparam}{\bm{\theta}}
\newcommand{\saeparam}{\aeparam^\ast}
\newcommand{\naeparam}{n_{\aeparam}}
\newcommand{\dataset}{\mathcal{X}}
\newcommand{\loss}{\mathcal{L}}
\newcommand{\lossData}{\loss_\textnormal{data}}
\newcommand{\lossSympl}{\loss_\textnormal{sympl}}
\newcommand{\nconv}{n_\textnormal{conv}}
\newcommand{\nfull}{n_\textnormal{full}}
\newcommand{\convblock}{\texttt{convblock}}
\newcommand{\convTblock}{\texttt{convTblock}}
\newcommand{\fullblock}{\texttt{fullblock}}
\newcommand{\flatten}{\texttt{flat}}
\newcommand{\splitting}{\texttt{split}}
\newcommand{\scale}{\texttt{scale}}
\newcommand{\stride}{s}
\newcommand{\bfl}{\bm{l}}
\newcommand{\bfc}{\bm{c}}
\newcommand{\bfs}{\bm{s}}
\newcommand{\bfq}{\bm{q}}
\newcommand{\bfp}{\bm{p}}
\newcommand{\csconv}{\bfc}
\newcommand{\lconv}{l^\textnormal{conv}}
\newcommand{\lfull}{l^\textnormal{full}}
\newcommand{\lsconv}{\bfl^\textnormal{conv}}
\newcommand{\lsfull}{\bfl^\textnormal{full}}
\newcommand{\strides}{\bfs}
\newcommand{\eNum}[2]{#1\text{e}#2}
\newcommand{\datasetTrain}{\dataset_\textnormal{train}}
\newcommand{\datasetVal}{\dataset_\textnormal{val}}
\newcommand{\cPtrain}{\mathcal{P}_\textnormal{train}}
\newcommand{\muGenA}{\mu_1}
\newcommand{\muGenB}{\mu_2}
\newcommand{\muGenC}{\mu_3}
\newcommand{\reddims}{\mathcal{D}_r}
\author{Patrick Buchfink\thanks{Institute of Applied Analysis and Numerical Simulation, University of Stuttgart, Pfaffenwaldring 57, 70569 Stuttgart, Germany. (\url{patrick.buchfink,haasdonk@mathematik.uni-stuttgart.de})} \and Silke Glas\thanks{Department of Computer Science, Cornell University, Ithaca, NY 14853. (\url{smg374@cornell.edu})} \and Bernard Haasdonk\footnotemark[1]}
\title{Symplectic Model Reduction of Hamiltonian Systems on Nonlinear Manifolds}
\begin{document}
\maketitle
\begin{abstract}
\small
\textbf{Abstract} Classical model reduction techniques project the governing equations onto linear subspaces of the high-dimensional state-space. For problems with slowly decaying Kolmogorov-$n$-widths such as certain transport-dominated problems, however, classical linear-subspace reduced-order models (ROMs) of low dimension might yield inaccurate results. Thus, the concept of classical linear-subspace ROMs has to be extended to more general concepts, like Model Order Reduction (MOR) on manifolds. Moreover, as we are dealing with Hamiltonian systems, it is crucial that the underlying symplectic structure is preserved in the reduced model, as otherwise it could become unphysical in the sense that the energy is not conserved or stability properties are lost.
To the best of our knowledge, existing literature addresses either MOR on manifolds or symplectic model reduction for Hamiltonian systems, but not their combination. In this work, we bridge the two aforementioned approaches by providing a novel projection technique called \emph{symplectic manifold Galerkin} (SMG), which projects the Hamiltonian system onto a nonlinear symplectic trial manifold such that the reduced model is again a Hamiltonian system. We derive analytical results such as stability, energy-preservation and a rigorous a-posteriori error bound. 
Moreover, we construct a weakly symplectic deep convolutional autoencoder as a computationally practical approach to approximate a nonlinear symplectic trial manifold. 
Finally, we numerically demonstrate the ability of the method to outperform (non-)structure-preserving linear-subspace ROMs and non-structure-preserving MOR on manifold techniques.
\end{abstract}

\msc{	65P10, 34C20, 37J25, 37M15, 37N30.}

\keywords{	Symplectic model reduction, Hamiltonian systems, energy preservation, stability preservation, nonlinear dimension reduction, autoencoders.}
%%%%%%%%%%%%%%%%%%%%%%%%%%%%%%%%%%%%%%%%
\section{Introduction}
\label{Sec:introduction}
%%%%%%%%%%%%%%%%%%%%%%%%%%%%%%%%%%%%%%%%
Numerical simulation models for the approximation of complex physical systems are usually of high dimension and, thus, require large computational costs.
This is infeasible whenever real-time evaluations are needed or a multi-query parametric setting is given in which the high-dimensional model needs to be evaluated for various parameters, e.g., for the repeated evaluation in an optimization setting. To accelerate computations, we employ \emph{model order reduction} (MOR) to derive efficiently evaluable low-dimensional models with rigorous error control w.r.t.\ the high-dimensional model. Well-known examples for MOR methods are, e.g., the reduced basis method, the proper orthogonal decomposition or balanced truncation, see \cite{benner2017model} for an introduction to different MOR methods. 

In this paper, our focus is on reducing dynamical systems of ordinary differential equations, which often arise from spatial discretization of partial differential equations. These dynamical systems frequently possess physical characteristics of the modeled phenomena such as an underlying structure or conservation properties. 
Possible structures that have been addressed in the context of MOR comprise, e.g., Lagrangian \cite{lall2003structure, carlberg2015preserving}, Hamiltonian or port-Hamiltonian \cite{polyuga2010structure, chaturantabut2016structure, beattie2019structure} structures.
We are in particular interested in preserving Hamiltonian systems arising in, e.g., plasma physics \cite{morrison1980maxwell} or gyroscopic systems \cite{xu2005numerical}.
To this end we derive a reduced Hamiltonian system which preserves symplecticity, as otherwise the reduced system could become unphysical in the sense that the energy is not conserved or stability properties are lost.
Structure-preserving model reduction for Hamiltonian systems has already been presented for linear-subspace methods in \cite{PM2016,AH2017,BBH2019,BHR2020,UKY2021,sharma2021hamiltonian}, for nonlinear Poisson systems in \cite{HP2021} and in \cite{HPR2020, Pagliantini2021} utilizing a dynamical-in-time approach motivated by low-rank approximations. We extend this literature by developing reduced models for Hamiltonian systems on nonlinear symplectic trial manifolds, which can drastically lower the dimension of the reduced model needed to build an accurate reduced model.

One motivation for using nonlinear manifolds as low-dimensional approximations instead of linear subspaces arises from model reduction of transport-dominated problems. It is known that such problems can exhibit a slow decay of the Kolmogorov-$n$-widths
of, e.g., $n^{-1/2}$ \cite{ohlberger2016reduced, greif2019decay}, which describes the best-possible error for a linear-subspace ROM of size $n$. Recent efforts to break the Kolmogorov-$n$-widths for transport problems include transforming and shifting the basis functions as well as the physical domain of the individual snapshots \cite{iollo2014advection, welper2017interpolation, reiss2018shifted, cagniart2019model, black2020projection}, the method of freezing \cite{ohlberger2013nonlinear} and splitting of one global domain into multiple local linear subspaces \cite{ohlberger2015error}. 
Model reduction for general nonlinear manifolds has already been performed in \cite{gu2011model}, which is limited to piecewise-linear manifolds and in \cite{cruz2020}, where the manifolds is constructed via moment matching and the equations of the nonlinear dynamical systems have to be known. In \cite{Rim2019ManifoldAV}, the authors introduced manifold approximations via transported subspaces, which restricts to 1D problems for now. 
Recent approaches using autoencoders have been introduced in \cite{LC2020A} as MOR on manifolds and extended in \cite{LC2020B} to satisfy physical conservation laws. In \cite{KCWZ2020B}, a shallow masked autoencoder combined with a hyper-reduction approach has been employed to construct a reduced manifold. In \cite{FDM2021}, the authors not only learn a reduced manifold via an autoencoder, but also utilize a feed-forward network to approximate the reduced model.

To the best of our knowledge the aforementioned approaches either consider symplectic model reduction or MOR on manifolds, but not their combination. Our goal is to bridge the two approaches by providing a method for symplectic model reduction on manifolds. 
In particular, this article provides the following new contributions:
\begin{enumerate}
	\item Symplectic manifold Galerkin (SMG) projection techniques, which project Hamiltonian systems on nonlinear symplectic trial manifolds (\cref{Sec:nonlinear_trial_manifold}). Using this technique, the resulting SMG-ROM is again a Hamiltonian system. 
	\item Analysis, which includes:\\[-1.4em]
	\begin{itemize}
		\item Preservation of energy and stability: we detail sufficient conditions for the reduced model to preserving energy and/or stability in the sense of Lyapunov (\cref{Sec:energy_preservation,Sec:stability_preservation}).\\[-1.4em]
		\item Error bound: we develop a rigorous a-posteriori error bound based on a time-discrete formulation (\cref{Sec:error_bound}).
	\end{itemize}
	\item Weakly symplectic deep convolutional autoencoder (weakly symplectic DCA): we present a new weakly symplectic DCA (\cref{Sec:SymAuto}) to approximate a nonlinear symplectic trial \mbox{manifold.}\footnote{Note, that we are not restricted to autoencoders here, but could use any technique which provides a symplectic map from the low-dimensional to the high-dimensional space, see \cref{Rem:AlternativesToAutoencoders}.}%
	\item Numerical experiments: We numerically investigate the linear wave equation for the challenging case of a thin moving pulse and demonstrate the ability of the SMG-ROM to outperform (a) (non-)structure-preserving linear-subspace ROMs and (b) non-structure-preserving MOR on manifolds (\cref{Sec:exp_lin_wave}).
\end{enumerate}

This paper is structured as follows: 
in \cref{Sec:SMG}, we define the high-dimensional Hamiltonian system and subsequently construct the reduced Hamiltonian system on a nonlinear symplectic trial manifold. Additionally, we show that the reduced model can inherit properties like energy- and stability-preservation and we develop a rigorous a-posteriori error bound. In \cref{Sec:SymAuto}, we detail the weakly symplectic DCA. Numerical experiments on a Hamiltonian system with known slowly decaying Kolmogorov-$n$-widths are performed in \cref{Sec:NumExp}. Finally, we conclude in \cref{Sec:conclusions}. 

%%%%%%%%%%%%%%%%%%%%%%%%%%%%%%%%%%%%%%%%
\section{Symplectic Model Reduction on Manifolds}
\label[secname]{Sec:SMG}
%%%%%%%%%%%%%%%%%%%%%%%%%%%%%%%%%%%%%%%%
In this section, we introduce the model reduction of Hamiltonian systems on nonlinear symplectic trial manifolds. To this end, we start with a review of the necessary details on symplectic geometry and subsequently state our full-order model, a parametric Hamiltonian system. Then, we detail our reduced model on a nonlinear symplectic trial manifold and show that stability and energy can be preserved with our \emph{symplectic manifold Galerkin} (SMG) projection technique. We conclude this section by developing a rigorous a-posteriori error bound.

\subsection{Symplectic Geometry}
Let $\cM$ be a smooth manifold and let $\omega$ be a symplectic form on $\cM$, i.e., a closed, non-degenerate differential 2-form.\footnote{See, e.g., \cite{marsden1995introduction, lee2013} for an introduction of smooth manifolds.} It follows immediately, that $\cM$ needs to be of even dimension \cite[Proposition 22.7]{lee2013}.
The combination of a smooth manifold and a symplectic form $(\cM, \omega)$ is denoted a symplectic manifold.
For the case of $\cM$ being a vector space over $\R$ of even dimension $2N$, $N \in \mathbb{N}$, we introduce the corresponding definition of a symplectic form. 
\begin{mydef}[Symplectic Form over $\R$, Symplectic Vector Space]
Let $\mathbb{V}$ be an $\R$-vector space of dimension $2N$, $N \in \mathbb{N}$. Let $\omega: \mathbb{V} \times \mathbb{V} \rightarrow \R$ be a
skew-symmetric and non-degenerate bilinear form, i.e., for all $v_1, v_2 \in \mathbb{V}$, it holds
\begin{align*} 
\omega (v_1,v_2) = - \omega (v_2, v_1) \qquad \textnormal{and} \qquad \omega(v_1, v_3) = 0 \quad \forall v_3 \in \mathbb{V} \ \Rightarrow  \  v_1 = 0.
\end{align*}
The bilinear form $\omega$ is called \emph{symplectic form} on $\mathbb{V}$ and the pair $(\mathbb{V}, \omega)$ is called \emph{symplectic vector space}.
\end{mydef}
For a symplectic form $\omega$ on $\mathbb{V}$, dim$(\mathbb{V})=2N$, it is known that there exists a basis\\
\mbox{$\{e_1,\ldots, e_N, f_1, \ldots, f_N \}\subset \mathbb{V}$} such that the symplectic form can be written in \emph{canonical coordinates}
\begin{align}\label{Eq:Canonical_Sympl_Form}
\omega(v_1,v_2)
= \omega_{2N}(\bfv_1, \bfv_2)
:= \rT{\bfv_1} \J_{2N} \bfv_2, \qquad \forall v_1, v_2 \in \mathbb{V},
\end{align}
where $\bfv_1, \bfv_2 \in \R^{2N}$ denote the respective coordinate vector of $v_1, v_2$, see, e.g., \cite{da2008lectures}.
The matrix
\begin{align*}
\J_{2N} := \begin{pmatrix} \bm{0}_N & \mathbb{I}_N \\ -\mathbb{I}_N  & \bm{0}_N \end{pmatrix} \in \R^{2N \times 2N}, 
\end{align*}
is called the \emph{canonical Poisson matrix}, where $\mathbb{I}_N\in \R^{N \times N}$ is the identity matrix and $\bm{0}_N \in \R^{N \times N}$ is the matrix of all zeros. One can easily see, that $\rT{\J_{2N}} \J_{2N} = \J_{2N} \rT{\J_{2N}} = \mathbb{I}_{2N}$ and $\J_{2N} \J_{2N}= \rT{\J_{2N}}\rT{\J_{2N}} = -\mathbb{I}_{2N}$.
With this notation at hand, we define a symplectic map as follows. 
\begin{mydef}[Symplectic Map, Symplectic Matrix {\cite[Section VI.2]{HLW2006}}]
Let $U\subset \R^{2n}$ be an open set. A differentiable map $d:U \rightarrow \R^{2N} (n \le N)$ is called \emph{symplectic} if the Jacobian matrix\footnote{Note that the subscript $\bfx$ denotes the derivative w.r.t.\ the variable and the function argument $\bfx$ indicates the position at which the Jacobian is evaluated.} $\Jacx{\bfx} \in \R^{2N\times 2n}$ is a \emph{symplectic matrix} for every $\bfx \in U$, i.e.,
\begin{align}\label{Eq:Canonical_sympl}
\rTb{\Jacx{\bfx}} \J_{2N} \Jacx{\bfx} = \J_{2n}.
\end{align}
\end{mydef}
Note that due to the above definition, if the Jacobian $\Jacx{\bfx}$ is a symplectic matrix, it implies that $\Jacx{\bfx}$ has full column rank, i.e., rank$(\Jacx{\bfx})=2n$ for $n \le N$, since $\J_{2N}$ and $\J_{2n}$ are nonsingular. 
With the above definition, we now can introduce the notion of the symplectic inverse. 

\begin{mydef}[Symplectic Inverse]
If $\Jacx{\bfx} \in \R^{2N\times 2n}$ is a symplectic matrix, we denote by\\
$\sI{\Jacx{\bfx}} \in \R^{2n\times 2N}$ the \emph{symplectic inverse}\footnote{The symplectic inverse should not be confused with the Moore--Penrose pseudo inverse which is often referred with a similar notation.}
\begin{align*}
\sI{\Jacx{\bfx}} :=  \rT{\J_{2n}} \rTb{\Jacx{\bfx}}\J_{2N}.
\end{align*}
\end{mydef}
It follows that 
\begin{align}\label{eq:prop_symplectic_inverse}
\sI{\Jacx{\bfx}}{\Jacx{\bfx}}=   \rT{\J_{2n}} \rTb{\Jacx{\bfx}}\J_{2N} {\Jacx{\bfx}}= \rT{\J_{2n}} \J_{2n} = \mathbb{I}_{2n},
\end{align}
as well as
\begin{align}\label{Eq:prop_sym_matrix}
\sI{\Jacx{\bfx}} \J_{2N} =  - \rT{\J_{2n}} \rTb{\Jacx{\bfx}}=  \J_{2n} \rTb{\Jacx{\bfx}} .
\end{align}

\subsection{Full-Order Model} We introduce our high-dimensional full-order model (FOM), which is a parametric autonomous Hamiltonian system. Let $\cH:\R^{2N} \times \cP \rightarrow \R$ be a smooth function called \emph{Hamiltonian function} dependent on the parameter $\bfmu \in \cP \subset \R^{p}, p \in \mathbb{N}$. Further, let $\mathcal{I}:=(0,T]$ be a time interval with final time $0 < T < \infty$. The FOM in canonical coordinates reads: for a given $\bfmu \in \cP$, $\bfx_0(\bfmu) \in \R^{2N}$, find $\bfx(\cdot;\bfmu) \in C^{1}(\mathcal{I}, \R^{2N})$ such that
\begin{align}\label{Eq:FOM_canonical}
\ddt \bfx(t;\bfmu) = \J_{2N} \nabla_{\bfx}\cH(\bfx(t;\bfmu);\bfmu) =: X_{\cH}(\bfx(t;\bfmu);\bfmu), \quad \bfx(0;\bfmu) = \bfx_{0}(\bfmu),
\end{align}
where $X_{\cH}:\R^{2N} \times \cP \rightarrow \R$ is the so-called \emph{Hamiltonian vector field} and $C^{1}(\mathcal{I}, \R^{2N})$ denotes continuously differentiable functions in time with values in $\R^{2N}$.
The \emph{flow} $\phi_t(\cdot; \bfmu): \R^{2N} \rightarrow \R^{2N}$ of $X_{\cH}$ is a mapping evolving the initial state to the corresponding solution of \cref{Eq:FOM_canonical}, i.e.,
\begin{align*}
\phi_t(\bfx_0(\bfmu);\bfmu) := \bfx(t;\bfmu; \bfx_0(\bfmu)),
\end{align*}
where $\bfx(t;\bfmu; \bfx_0)$ represents the solution with initial data $\bfx_0(\bfmu)$.
An important property of the flow is that it preserves the Hamiltonian $\cH$
\begin{align*}
\ddt \cH(\phi_t(\bfx;\bfmu);\bfmu) &=  \rTb{\nabla_{\bfx}\cH(\phi_t(\bfx;\bfmu);\bfmu)}\ddt \phi_t(\bfx;\bfmu) \\&=  \rTb{\nabla_{\bfx}\cH(\phi_t(\bfx;\bfmu);\bfmu)}  \J_{2N} \nabla_{\bfx}\cH(\phi_t(\bfx;\bfmu);\bfmu)  =0,
\end{align*}
where the last equality follows from the skew-symmetry of $\J_{2N}$. 

In order that \cref{Eq:FOM_canonical} is well-posed, we assume that for any $\bfmu \in \cP$ the operator $X_{\cH}(\cdot;\bfmu): \R^{2N} \rightarrow \R$ is Lipschitz continuous w.r.t.\ $\bfx$. 

\subsection{Symplectic Manifold Galerkin ROM}
\label{Sec:nonlinear_trial_manifold}

We now detail how to derive a symplectic manifold Galerkin ROM (SMG-ROM) on a nonlinear symplectic trial manifold and at the same time ensure that this reduced system preserves the symplectic structure of the Hamiltonian system. This nonlinear symplectic trial manifold aims at approximating the detailed solution manifold 
\begin{align*}
\cM := \left\{ \bfx(t;\bfmu) \in \R^{2N} \;\big|\; t \in [0,T], \bfmu \in \cP \right\}. 
\end{align*}
In particular, we are looking for approximate solutions of the form 
\begin{align} \label{Eq:reconstructed_solution} 
 	\tbfx(t;\bfmu) := \bfxref(\bfmu) + d(\bfx_r(t;\bfmu)) \approx \bfx(t;\bfmu),
\end{align}
with the (possibly parametric) \emph{reference state} $\bfxref(\bfmu) \in \R^{2N}$, the \emph{reduced coordinates} $\bfx_r(t;\bfmu) \in \R^{2n}$ (also known as \emph{latent variables}) with $n \ll N$, the \textit{reconstructed solution} $\tbfx(t;\bfmu) \in \R^{2N}$ and the \emph{reconstruction function} (or \emph{decoder}) $d: \R^{2n} \rightarrow \R^{2N}$.
The reconstruction function $d$ is assumed to be a (mostly nonlinear) symplectic map \cref{Eq:Canonical_sympl}.
With this choice, the reconstructed solution evolves on the nonlinear trial manifold
\begin{align}\label{Eq:trial_manifold}
\tcM(\bfmu) := \bfxref(\bfmu) + \left\{ d(\bfa_r) \in \R^{2N} \;\big|\; \bfa_r \in \R^{2n} \right\}.
\end{align}
The evolution of the reconstructed solution in time is 
\begin{align} \label{Eq:deriv_reconstructed}
\ddt \tbfx(t;\bfmu) =  \left(\bm{D}_{\bfx_r} d(\bfx_r(t;\bfmu))\right)  \ddt \bfx_r(t;\bfmu),
\end{align}
due to the chain rule. To construct the ROM, we define the time-continuous residual of the FOM \cref{Eq:FOM_canonical} w.r.t.\ the approximation $\tbfx(t;\bfmu)$ by 
\begin{align*}
 r\left(t;\bfmu\right) &:= \ddt \tbfx(t;\bfmu) - \J_{2N} \nabla_{\bfx}\cH(\tbfx(t;\bfmu);\bfmu) \\
 &=  \left(\bm{D}_{\bfx_r} d(\bfx_r(t;\bfmu))\right)  \ddt \bfx_r(t;\bfmu)- \J_{2N} \nabla_{\bfx}\cH(\bfxref(\bfmu) + d(\bfx_r(t;\bfmu));\bfmu).
\end{align*}
For the ROM, we require that the symplectic projection of the residual
\begin{align*}
 \left(\bm{D}_{\bfx_r} d(\bfx_r(t;\bfmu))\right)^{+}r&\left(t;\bfmu\right)
\stackrel{!}{=} \bfzero_{2n},
\end{align*}
vanishes. Note that $\left(\bm{D}_{\bfx_r} d(\bfx_r(t;\bfmu))\right)^{+}$ exists due to the symplecticity of $d$. Reformulating this equation yields
\begin{align*}
\ddt \bfx_r(t;\bfmu) &= \left(\bm{D}_{\bfx_r} d(\bfx_r(t;\bfmu))\right)^{+} \J_{2N} \nabla_{\bfx}\cH(\bfxref(\bfmu) + d(\bfx_r(t;\bfmu));\bfmu)\\
&= \J_{2n}
\underbrace{
	\rT{\left(\bm{D}_{\bfx_r} d(\bfx_r(t;\bfmu))\right)} \nabla_{\bfx}{\cH}(\bfxref(\bfmu) + d(\bfx_r(t;\bfmu));\bfmu)
}_{
	= \nabla_{\bfx_r}{\cH_r}(\bfx_r(t;\bfmu);\bfmu)
},
\end{align*}
which follows due to the properties of the Poisson matrix in \cref{Eq:prop_sym_matrix}, \cref{eq:prop_symplectic_inverse} and the chain rule. We denote by ${\cH_r}(\cdot; \bfmu): \R^{2n} \rightarrow \R$, $\bfx_r \mapsto \cH(\bfxref(\bfmu) + d(\bfx_r); \bfmu)$, the \emph{reduced Hamiltonian}. 
Thus, the SMG-ROM on the nonlinear symplectic trial manifold reads:
 for a given $\bfmu \in \cP$, $\bfx_{r,0}(\bfmu) \in \R^{2n}$, find $\bfx_r(\cdot;\bfmu) \in C^{1}(\mathcal{I}, \R^{2n})$ such that
\begin{align}\label{Eq:ROM_canonical}
&\ddt \bfx_r(t;\bfmu) = \J_{2n} \nabla_{\bfx_r}{\cH_r}(\bfx_r(t;\bfmu);\bfmu)&
&\bfx_{r}(0;\bfmu) = \bfx_{r,0}(\bfmu).
\end{align}
By $X_{{\cH_r}}:\R^{2n} \times \cP \rightarrow \R^{2n}$, $(\bfx_r,\bfmu) \mapsto \J_{2n} \nabla_{\bfx_r}{\cH_r}(\bfx_r;\bfmu)$ we denote the reduced Hamiltonian vector field.
Moreover, the choice of $d$ to be a symplectic map guarantees that the tuple $(\tcM(\bfmu), \omega_{2N})$ with the nonlinear trial manifold from \cref{Eq:trial_manifold} and the canonical symplectic form from \cref{Eq:Canonical_Sympl_Form} is a symplectic manifold, see {\cite[Section VII.2.3]{HLW2006}}. This is why we refer to $(\tcM(\bfmu), \omega_{2N})$ as a \emph{nonlinear symplectic trial manifold}.

In order to solve \cref{Eq:ROM_canonical} numerically, a symplectic numerical integration scheme is applied and the resulting system of nonlinear equations is solved for each discrete time step with a quasi-Newton scheme which omits second-order derivatives of $d$ in analogy to \cite[Section 3.4]{LC2020A}.

In the following we show that energy and stability can be preserved during the time evolution, since the above constructed reduced system is again a Hamiltonian system.

\subsection{Energy Preservation} 
\label{Sec:energy_preservation}

\begin{myprop}\label{prop:err_in_hamiltonian}
Let $\bfx(t;\bfmu)$ be the solution of the FOM \cref{Eq:FOM_canonical} at time $t$ and let $\tbfx(t;\bfmu)$ be the reconstructed solution \cref{Eq:reconstructed_solution} obtained from solving a SMG-ROM. Then, for any $\bfmu \in \cP$, the error in the Hamiltonian
\begin{align} \label{eq:err_in_hamiltonian}
	\varDelta \cH(t;\bfmu)
:= |\cH(\bfx(t;\bfmu);\bfmu) - \cH(\tbfx(t;\bfmu);\bfmu)|
\equiv |\cH(\bfx_0(\bfmu);\bfmu) - \cH(\tbfx(0, \bfmu);\bfmu)| 
\end{align}
is constant for all $t \in \cI$.
\end{myprop}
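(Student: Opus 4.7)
The plan is to show that each of the two terms inside the absolute value in \cref{eq:err_in_hamiltonian} is individually constant in time. Once this is established, the difference (and hence its absolute value) is trivially time-independent, which gives the identity to the value at $t=0$.

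First I would recall the computation already given in the excerpt for the FOM: since $\bfx(t;\bfmu)$ solves \cref{Eq:FOM_canonical} and $\J_{2N}$ is skew-symmetric, $\ddt \cH(\bfx(t;\bfmu);\bfmu)= \rTb{\nabla_{\bfx}\cH} \J_{2N}\nabla_{\bfx}\cH = 0$, so $\cH(\bfx(t;\bfmu);\bfmu)\equiv \cH(\bfx_0(\bfmu);\bfmu)$. For the reconstructed solution $\tbfx(t;\bfmu)=\bfxref(\bfmu)+d(\bfx_r(t;\bfmu))$, the key observation is that by the very definition of the reduced Hamiltonian introduced above \cref{Eq:ROM_canonical}, we have
\begin{align*}
\cH(\tbfx(t;\bfmu);\bfmu) \;=\; \cH(\bfxref(\bfmu)+d(\bfx_r(t;\bfmu));\bfmu) \;=\; \cH_r(\bfx_r(t;\bfmu);\bfmu).
\end{align*}
Hence it suffices to prove that $\cH_r$ is conserved along trajectories of the SMG-ROM.

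Next I would apply exactly the same argument as for the FOM, but now at the reduced level. Since $\bfx_r(\cdot;\bfmu)$ satisfies \cref{Eq:ROM_canonical}, which is a canonical Hamiltonian system with Hamiltonian $\cH_r(\cdot;\bfmu)$ and canonical Poisson matrix $\J_{2n}$, the chain rule and the skew-symmetry of $\J_{2n}$ give
\begin{align*}
\ddt \cH_r(\bfx_r(t;\bfmu);\bfmu) \;=\; \rTb{\nabla_{\bfx_r}\cH_r(\bfx_r(t;\bfmu);\bfmu)}\, \J_{2n}\, \nabla_{\bfx_r}\cH_r(\bfx_r(t;\bfmu);\bfmu) \;=\; 0.
\end{align*}
Combining this with the FOM conservation identity and with $\cH(\tbfx(t;\bfmu);\bfmu)=\cH_r(\bfx_r(t;\bfmu);\bfmu)$ yields $\varDelta\cH(t;\bfmu)=|\cH(\bfx_0(\bfmu);\bfmu)-\cH(\tbfx(0;\bfmu);\bfmu)|$ for every $t\in\cI$.

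There is no genuine obstacle here; the entire content of the proposition is that the symplectic construction of the SMG projection turns the reduced equations into a canonical Hamiltonian system on $\R^{2n}$, so that Hamiltonian conservation carries over from the FOM to the ROM. The only point that must be flagged explicitly is that the identification $\cH(\tbfx;\bfmu)=\cH_r(\bfx_r;\bfmu)$ requires the reconstructed state to lie on the trial manifold $\tcM(\bfmu)$, which holds by definition of $\tbfx$ in \cref{Eq:reconstructed_solution}; without this identification, conservation of $\cH_r$ under the ROM flow would not immediately translate into conservation of $\cH$ along the reconstructed trajectory.
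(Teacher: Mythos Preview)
Your proposal is correct and follows exactly the same approach as the paper: both argue that $\cH$ is conserved along the FOM trajectory and, via the identification $\cH(\tbfx(t;\bfmu);\bfmu)=\cH_r(\bfx_r(t;\bfmu);\bfmu)$, that $\cH_r$ is conserved along the SMG-ROM trajectory because \cref{Eq:ROM_canonical} is itself a canonical Hamiltonian system. The paper's proof is simply a terser version of what you wrote.
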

\begin{proof}
The proof is based on the property that the Hamiltonian is preserved over time for both, FOM and ROM. Omitting $\bfmu$-dependency for readability, this reads
\begin{align*}
&\cH(\bfx(t))
= \cH(\bfx_0)&
&\text{and}&
&\cH(\tbfx(t))
= {\cH}_r(\bfx_r(t)) 
= {\cH}_r(\bfx_r(0))
= \cH(\tbfx(0)).&
\end{align*}
\end{proof}

\begin{myrem} \label{rem:exact_reproduction}
The reference state $\bfxref(\bfmu)$ is set to $\bfx_0(\bfmu) - d(\bfx_{r,0}(\bfmu))$ for an arbitrary initial value $\bfx_{r,0}(\bfmu) \in \R^{2n}$ of the reduced system. As observed in \cite[Remark 3.1]{LC2020A}, this leads to an exact reproduction of the initial value $\tbfx(0; \bfmu) = \bfx_0(\bfmu)$. With the SMG projection, we can further conclude with \cref{prop:err_in_hamiltonian} that this implies an exact reproduction of the Hamiltonian $\varDelta \cH (t; \bfmu) \equiv 0$.
\end{myrem}

\subsection{Stability Preservation}
\label{Sec:stability_preservation}
In this section we examine the question of stability of the full-order and reduced model in the sense of Lyapunov. For the ease of notation, we omit the time and $\bfmu$-dependency for the remainder of this section whenever possible.

Consider the differential equation 
\begin{align} \label{Eq:dyn_system}
\ddt{\bm{x}} = f(\bm{x}),
\end{align}
where $f$ is a smooth function from an open set $U \subset \R^{2N}$ into $\R^{2N}$. Let $f$ have an equilibrium point $\bm{x}_e \in U$, such that $f(\bm{x}_e)=0$ and let $\phi(t, \bfx)$ be the flow of \cref{Eq:dyn_system}. Then, Lyapunov stability of the point $\bm{x}_e$ is defined as: 
\begin{mydef}[Lyapunov Stability {\cite[Section 12]{MO2017}}]
An equilibrium point $\bm{x}_e \in U$ of \cref{Eq:dyn_system} is \emph{Lyapunov stable} if for every $\epsilon >0$, there exists $\delta >0$ such that $\norm{\bm{x}_e - \phi(t,\bm{x}_0)}_2<\epsilon$ for all $0 \le t< \infty$ whenever $\norm{ \bm{x}_e - \bm{x}_0 }_2<\delta$. 
\end{mydef}

In order to state sufficient conditions for an equilibrium point to be Lyapunov stable, we recall Lyapunov's stability theorem. 

\begin{mythm}[Lyapunov's Stability Theorem {\cite[Theorem 12.1.1]{MO2017}}] \label{Thm:Lyapunov_stability}
Let $\bm{x}_e$ be an equilibrium point of \cref{Eq:dyn_system}. If there exists a function $V:U\rightarrow \R$ with $\bm{x}_e \in U\subset \R^{2N}$, such that $\nabla_{\bfx} V(\bm{x}_e)=\bm{0}$, $\nabla_{\bfx}^{2} V(\bm{x}_e)$ positive definite (where $\nabla_{\bfx}^2 V$ denotes the Hessian of $V$) and 
\begin{align} \label{Eq:cond_Lyapunov}
\rT{\nabla_{\bfx} V(\bm{x})}f(\bm{x}) \le {0},\quad  \forall \bm{x} \in U \subset \R^{2N},
\end{align}
then $\bm{x}_e$ is called a Lyapunov stable point. 
\end{mythm}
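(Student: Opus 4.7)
The plan is to use $V$—shifted by the constant $V(\bm{x}_e)$ so that $V(\bm{x}_e)=0$ without loss of generality, since such a shift affects neither $\nabla_{\bfx}V$ nor the hypothesis \cref{Eq:cond_Lyapunov}—as a trapping function. The key mechanism is that along any trajectory of \cref{Eq:dyn_system}, the chain rule together with \cref{Eq:cond_Lyapunov} makes $V$ non-increasing in time, so trajectories starting in a sufficiently small sublevel set of $V$ cannot cross a prescribed $\epsilon$-sphere around $\bm{x}_e$.

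First, I would establish that $\bm{x}_e$ is a strict local minimum of $V$. A second-order Taylor expansion of $V$ at $\bm{x}_e$ combined with $\nabla_{\bfx}V(\bm{x}_e)=\bm{0}$ and positive-definiteness of $\nabla_{\bfx}^{2}V(\bm{x}_e)$ produces $\rho>0$ such that $\overline{B_\rho(\bm{x}_e)}\subset U$ and $V(\bm{x})>0$ on $\overline{B_\rho(\bm{x}_e)}\setminus\{\bm{x}_e\}$. For an arbitrary $\epsilon\in(0,\rho]$, the sphere $S_\epsilon:=\{\bm{x}:\norm{\bm{x}-\bm{x}_e}_2=\epsilon\}$ is compact, so $m:=\min_{\bm{x}\in S_\epsilon}V(\bm{x})$ is attained and strictly positive. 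Continuity of $V$ at $\bm{x}_e$ with $V(\bm{x}_e)=0$ then yields $\delta\in(0,\epsilon)$ such that $V(\bm{x})<m$ whenever $\norm{\bm{x}-\bm{x}_e}_2<\delta$.

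Second, for any initial state $\bm{x}_0$ with $\norm{\bm{x}_0-\bm{x}_e}_2<\delta$, the flow $\phi(\cdot,\bm{x}_0)$ is continuous on its maximal forward interval of existence; as long as the trajectory remains in $U$, the chain rule together with \cref{Eq:cond_Lyapunov} gives
\[
\ddt V(\phi(t,\bm{x}_0))
=\rT{\nabla_{\bfx}V(\phi(t,\bm{x}_0))}\,f(\phi(t,\bm{x}_0))\le 0,
\]
hence $V(\phi(t,\bm{x}_0))\le V(\bm{x}_0)<m$ throughout. If the trajectory were to reach $S_\epsilon$ at some first time $t^\star<\infty$, then $V(\phi(t^\star,\bm{x}_0))\ge m$ would contradict this bound, so $\phi(t,\bm{x}_0)$ must stay in the open ball $B_\epsilon(\bm{x}_e)$ for every $t$ in its maximal interval.

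The main obstacle is promoting this local confinement into global-in-time stability, i.e., ruling out finite-time exit from $U$ or blow-up. I would resolve this by a standard maximal-existence argument: since the trajectory is trapped inside the compact set $\overline{B_\epsilon(\bm{x}_e)}\subset U$, its maximal forward interval must be $[0,\infty)$, and the bound $\norm{\phi(t,\bm{x}_0)-\bm{x}_e}_2<\epsilon$ then holds for every $t\ge 0$, which is precisely the definition of Lyapunov stability. All remaining steps—the Taylor expansion, the compactness-based extraction of $m$, and the chain-rule identity—are routine.
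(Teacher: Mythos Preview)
Your argument is a correct and standard proof of Lyapunov's stability theorem: the sublevel-set trapping argument via the strict local minimum, the compactness of $S_\epsilon$ to extract $m>0$, the chain-rule monotonicity of $V$ along trajectories, and the maximal-existence step are all sound. Note, however, that the paper does not supply its own proof of this theorem; it is quoted verbatim from \cite[Theorem 12.1.1]{MO2017} as background and used as a black box in the subsequent stability result for the reduced model. So there is no in-paper proof to compare against, but your proposal would serve as a complete justification if one were required.
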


The function $V$ in the aforementioned theorem is also called the \emph{Lyapunov function}. For Hamiltonian systems, a suitable candidate for $V$ is the Hamiltonian itself. In the case of autonomous Hamiltonians considered in this paper, a point $\bm{x}\in \R^{2N}$ is an equilibrium point of \cref{Eq:FOM_canonical} if and only if $\bm{x}$ is a critical point of $\cH$, i.e., a point s.t. $\nabla_{\bfx} \cH (\bm{x})=\bm{0}$ \cite[Proposition 3.4.16]{AM1987}. Following the well-known Dirichlet's stability theorem for Hamiltonian systems \cite[Corollary 12.1.1]{MO2017}, it suffices that the equilibrium point is a strict local maximum or minimum in order to be a Lyapunov stable point.  \Cref{Eq:cond_Lyapunov} is automatically fulfilled for autonomous Hamiltonian systems which can be easily seen by 
\begin{align*}
\rT{\nabla_{\bfx} \cH(\bm{x})} f(\bfx)= \rT{\nabla_{\bfx} \cH(\bm{x})} \J_{2N} \nabla_{\bfx} \cH(\bm{x}) = 0, \quad \bm{x} \in U,
\end{align*}
due to the skew-symmetry of $ \J_{2N} $. Similar to \cite[Theorem 18]{AH2017} in the special case of $d$ linear, we are now able to show, that if $\cH$ (or $-\cH$) is a Lyapunov function in an environment of the equilibrium point, then these equilibrium points for the full-order and reduced model are Lyapunov stable. In the following we extend this theorem to hold for more general nonlinear $d$.

\begin{mythm}[Lyapunov Stability]
Assume that $\bfx_e \in \R^{2N}$ is an equilibrium point of \cref{Eq:FOM_canonical} and there exists $\bfx_{r,e}\in \R^{2n}$ such that $\bfx_e = \bfxref + d(\bfx_{r,e})$.
If $\cH$ (or $-\cH$) is a Lyapunov function as defined in \cref{Thm:Lyapunov_stability}, then $\bfx_e$ and $\bfx_{r,e}$ are Lyapunov stable equilibrium points for the full-order model \cref{Eq:FOM_canonical} and the reduced model \cref{Eq:ROM_canonical}, respectively.
\end{mythm}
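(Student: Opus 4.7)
The plan is to verify the three hypotheses of Lyapunov's Stability Theorem (\cref{Thm:Lyapunov_stability}) separately for the FOM and the ROM, using $V = \cH$ (resp.\ $V_r = \cH_r$) as candidate Lyapunov functions. For the FOM part almost nothing has to be done: by assumption $\bfx_e$ is an equilibrium, $\nabla_{\bfx}\cH(\bfx_e)=\bm{0}$ follows from $X_{\cH}(\bfx_e)=\bm{0}$ together with the non-singularity of $\J_{2N}$, the Hessian condition is given, and the decay condition \cref{Eq:cond_Lyapunov} holds trivially since $\rT{\nabla_{\bfx}\cH(\bfx)}\J_{2N}\nabla_{\bfx}\cH(\bfx)=0$ by skew-symmetry of $\J_{2N}$.

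For the ROM, first I would show that $\bfx_{r,e}$ is indeed an equilibrium of \cref{Eq:ROM_canonical}. By the chain rule, $\nabla_{\bfx_r}\cH_r(\bfx_r) = \rTb{\bm{D}_{\bfx_r} d(\bfx_r)}\nabla_{\bfx}\cH(\bfxref + d(\bfx_r))$. Evaluating at $\bfx_{r,e}$ and using $\bfxref + d(\bfx_{r,e})=\bfx_e$ gives $\nabla_{\bfx_r}\cH_r(\bfx_{r,e}) = \bm{0}$, hence $X_{\cH_r}(\bfx_{r,e})=\bm{0}$. The decay condition for $\cH_r$ again holds as an equality by skew-symmetry of $\J_{2n}$.

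The main obstacle is verifying positive-definiteness of the reduced Hessian $\nabla_{\bfx_r}^{2}\cH_r(\bfx_{r,e})$. I would differentiate the chain-rule expression a second time; the product rule produces two contributions, one containing second derivatives of $d$ multiplied by $\nabla_{\bfx}\cH(\bfxref + d(\bfx_r))$, and one of the form $\rTb{\bm{D}_{\bfx_r}d(\bfx_r)}\nabla_{\bfx}^{2}\cH(\bfxref + d(\bfx_r))\,\bm{D}_{\bfx_r}d(\bfx_r)$. Evaluated at $\bfx_{r,e}$, the first term vanishes since $\nabla_{\bfx}\cH(\bfx_e)=\bm{0}$, leaving
\begin{align*}
\nabla_{\bfx_r}^{2}\cH_r(\bfx_{r,e}) = \rTb{\bm{D}_{\bfx_r}d(\bfx_{r,e})}\,\nabla_{\bfx}^{2}\cH(\bfx_e)\,\bm{D}_{\bfx_r}d(\bfx_{r,e}).
\end{align*}
Since $d$ is symplectic, its Jacobian is a symplectic matrix and therefore has full column rank $2n$ (as noted after \cref{Eq:Canonical_sympl}). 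Consequently, for any $\bm{y}\in\R^{2n}\setminus\{\bm{0}\}$, the vector $\bm{D}_{\bfx_r}d(\bfx_{r,e})\bm{y}\in\R^{2N}$ is non-zero, and positive-definiteness of $\nabla_{\bfx}^{2}\cH(\bfx_e)$ yields $\rT{\bm{y}}\nabla_{\bfx_r}^{2}\cH_r(\bfx_{r,e})\bm{y}>0$.

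With all three hypotheses established for both systems, \cref{Thm:Lyapunov_stability} applied to $\cH$ on the FOM and to $\cH_r$ on the ROM yields Lyapunov stability of $\bfx_e$ and $\bfx_{r,e}$ respectively. The case in which $-\cH$ is the Lyapunov function is completely analogous, since $-\cH_r$ inherits the sign change and the SMG-ROM vector field is unchanged in form. The only genuinely non-trivial ingredient throughout is the full column rank of the decoder Jacobian, which is exactly what the symplecticity assumption on $d$ supplies.
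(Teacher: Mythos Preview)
Your proposal is correct and follows essentially the same approach as the paper: chain rule to show $\nabla_{\bfx_r}\cH_r(\bfx_{r,e})=\bm{0}$, a second differentiation to obtain the reduced Hessian as $\rTb{\bm{D}_{\bfx_r}d(\bfx_{r,e})}\nabla_{\bfx}^{2}\cH(\bfx_e)\,\bm{D}_{\bfx_r}d(\bfx_{r,e})$, and positive definiteness via full column rank of the symplectic Jacobian. You are in fact slightly more explicit than the paper in two places---you spell out why the second-derivative-of-$d$ term in the Hessian vanishes at $\bfx_{r,e}$, and you make the injectivity of $\bm{D}_{\bfx_r}d(\bfx_{r,e})$ explicit---both of which the paper leaves implicit.
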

\begin{proof}
As $\bfx_e$ is an equilibrium point and $\cH$ is a Lyapunov function, it immediately follows from \cref{Thm:Lyapunov_stability} that $\bfx_e$ is Lyapunov stable. 
By using the chain rule, we arrive at $\nabla_{\bfx_r} \cH_r(\bfx_r) = \rTb{\Jac{\bfx_r}} \nabla_{\bfx} \cH(\bfxref + d(\bfx_r))$. Evaluating the latter at $\bfx_{r,e}$ yields
\begin{align*}
\nabla_{\bfx_r} \cH_r(\bfx_{r,e}) = \rT{\left(\bm{D}_{\bfx_r} d(\bfx_{r,e}) \right)}\nabla_{\bfx} \cH(\bfxref + d(\bfx_{r,e})) = \rT{\left(\bm{D}_{\bfx_r} d(\bfx_{r,e}) \right)}\nabla_{\bfx} \cH(\bfx_{e}) = \bm{0},
\end{align*}
where the last equivalence follows as $\bfx_e$ is a critical point for $\cH$, i.e., $\nabla_{\bfx}\cH(\bfx_{e}) = \bm{0}$. 
Thus $\bfx_{r,e}$ is an equilibrium point for \cref{Eq:ROM_canonical}. In order to show that $\bfx_{r,e}$ is a strict local minimum, we consider the Hessian matrix. Again, considering the chain rule, we get
\begin{align*}
\nabla_{\bfx_{r}}^2\cH_r(\bfx_{r,e}) =  \rTb{\bm{D}_{\bfx_r} d(\bfx_{r,e})} \nabla^2_{\bfx} \cH(\bfxref + d(\bfx_{r,e}))  (\bm{D}_{\bfx_r} d(\bfx_{r,e})).
\end{align*}
Therefore, for any $\bm{\xi} \in \R^{2n} \backslash \{ \bm{0} \} $, we obtain that $\nabla_{\bfx_r}^{2}\cH_r(\bfx_{r,e})$ is positive definite
\begin{align*}
\rT{\bm{\xi}}  \nabla_{\bfx_r}^2\cH_r(\bfx_{r,e}) \bm{\xi}  =\rT{\left(\bm{D}_{\bfx_r} d(\bfx_{r,e})\bm{\xi} \right)} \nabla^2_{\bfx} \cH(\bfxref + d(\bfx_{r,e})) \left(\bm{D}_{\bfx_r} d(\bfx_{r,e})\bm{\xi} \right) >0,
\end{align*}
due to the positive definiteness of $\nabla_{\bfx}^2 \cH(\bfx_e)$. With Dirichlet's stability theorem, we conclude that $\bfx_{r,e}$ is a Lyapunov stable point for \cref{Eq:ROM_canonical}.
\end{proof}

\subsection{Error Estimation}
In this section, we deduce a rigorous a-posteriori error bound. This error bound is coupled to the chosen time-discretization scheme, which needs to be a so-called symplectic integrator \cite{HLW2006, BM2017} in order to preserve the underlying symplectic structure of the Hamiltonian system. 
 
\subsubsection{Time Discretization}
We use a Runge-Kutta (RK) scheme for the time discretization of our Hamiltonian system. Since not all RK methods meet the requirements of being a symplectic integrator, we detail the necessary conditions after the definition.

We start by applying an equidistant time discretization to the interval $(0,T]$ for $0 < T < \infty$. Therefore, we set $\varDelta t:= \frac{T}{K}, K \in \mathbb{N}, t^{k}:= k\varDelta t, 0 \le k \le K$, and seek an approximation $\bfx^k \approx \bfx(t^{k})$. Note that the superscript, against the standard notation, denotes the enumeration of time-dependent variables instead of the power of those quantities.

\begin{mydef}[Runge Kutta method]
An s-stage RK method (formulated for the autonomous Hamiltonian system \cref{Eq:FOM_canonical}) with $s \in \mathbb{N}$, real numbers $b_i$, $a_{ij}$ $(i,j =1,\ldots,s)$ is for $1\le k \le K$ given by 
\begin{align*}
	\bfw_i^k &= X_{\cH}\left(\bm{x}^{k-1}+ \varDelta t \sum_{j=1}^{s}a_{ij}\bfw_j^k \right), \qquad i = 1, \ldots ,s, \\
	\bm{x}^{k} &= \bm{x}^{k-1} + \varDelta t\sum_{i=1}^{s} b_i \bfw_i^k.
\end{align*}
\end{mydef}
Runge-Kutta methods are symplectic if the coefficients satisfy
\begin{align*}
	b_i a_{ij} + b_j a_{ji}=b_i b_j, \qquad \textnormal{for all } i,j = 1, \ldots, s, 
\end{align*}
see \cite[Theorem VI.4.3, p.178]{HLW2006}, which, e.g., holds for the implicit midpoint rule, 
\begin{subequations} 
\begin{align} \label{eq:impl_midpoint}
	\bfw_1^k &= X_{\cH}\left(\bm{x}^{k-1}+ \frac{\varDelta t}{2} \bfw_1^k \right), \\
	\bm{x}^{k} &= \bm{x}^{k-1} + \varDelta t \bfw_1^k, 
\end{align}
\end{subequations}
which therefore is a symplectic integrator. 
To compute a solution for the time-discrete ODE, we need to solve the following algebraic equations at every discrete time-step $k$ 
\begin{align} \label{Eq:FOM_RK}
\bm{r}_{i}^{k}(\bm{w}^{k}_1, \ldots, \bfw^{k}_s) = \bfzero,  \qquad i =1, \ldots,s,
\end{align}
where the $i-$th RK residual is defined by
\begin{align*}
\bfr_{i}^{k}(\bfw_1, \ldots, \bfw_s) := \bfw_i - X_{\cH}\bigg(\bfx^{k-1} + \varDelta t \sum_{j=1}^{s}a_{ij} \bfw_j \bigg), \qquad i =1, \ldots,s,
\end{align*}
with the updated state 
\begin{align}\label{Eq:state_FOM}
\bfx^{k} = \bfx^{k-1} +\varDelta t \sum_{i=1}^{s} b_i \bfw_i^{k}.
\end{align}

In order to compute a time-discrete approximation of the reduced problem \cref{Eq:ROM_canonical}, we solve the algebraic equations for $\bm{w}^{k}_{r,i} \in \R^{2n}, i=1, \ldots,s$
\begin{align} \label{Eq:ROM_RK}
\bm{r}_{r,i}^{k}(\bm{w}^{k}_{r,1}, \ldots, \bfw^{k}_{r,s}) = \bfzero,  \qquad i =1, \ldots,s,
\end{align}
where the $i-$th reduced RK residual and updated reduced state are given by
\begin{subequations}
\begin{align}
\bm{r}_{r,i}^{k}(\bfw_{r,1}, \ldots, \bfw_{r,s}) :&= \bfw_{r,i} - X_{\cH_r}\bigg(\bfx_r^{k-1} + \varDelta t \sum_{j=1}^{s}a_{ij} \bfw_{r,j} \bigg),&
	&i =1, \ldots,s, \\
\bfx_{r}^{k} &= \bfx_r^{k-1} +\varDelta t \sum_{i=1}^{s} b_i \bfw_{r,i}^{k}.  \label{Eq:Update_ROM}
\end{align}
\end{subequations}

\subsubsection{Error Bound}
\label{Sec:error_bound}
In this section we provide a rigorous a-posteriori error bound. We assume that a RK method, defined in the section above, has been used to construct the time-discrete scheme. For linear $d$, bounds have been derived in \cite[Theorem 6.19]{CBA2017}, which we extend in the following to error bounds for more general nonlinear $d$. 

\begin{mythm} [Error Bound] 
If $X_{\cH}$ is Lipschitz continuous, i.e., there exists a constant $\kappa >0$ such that 
\begin{align*}
\| X_{\cH}(\bfx) - X_{\cH}(\bm{y}) \|_{2} \le \kappa \| \bfx - \bm{y}  \|_2, \qquad \forall \bfx, \bm{y} \in \R^{2N},
\end{align*}
and $\varDelta t$ is chosen sufficiently small such that
\begin{enumerate}[label=$(a\arabic*)$]
	\item the matrix $\bm{D} \in \R^{s \times s}$ with entries $d_{ij}:= \delta_{ij} - \kappa \varDelta t |a_{ij}|$ is invertible and
	\item for every $\bfx_s, \bm{y}_s \in \R^{s}_{\ge \bm{0}}$, if $\bm{D}\bfx_{s} \le \bm{y}_s$, then $\bfx_s \le \bm{D}^{-1}\bm{y}_s$,\footnote{For the implicit midpoint rule $(s=1)$, this condition holds if $D=1- (\kappa \varDelta t)/{2} \in \R^{+}$, which can be ensured by choosing $\varDelta t < 2/\kappa$.}
\end{enumerate}
then the following error bound holds for $1 \le k \le K$
\begin{align*}
\| \bfx^{k} &- \tbfx^{k}   \|_{2} \le (c_1)^{k} \norm{\bfx^{0} - \tbfx^{0} }_2 +  \sum_{\ell =0}^{k-1} (c_1)^{\ell} \cdot \\
&\left(\varDelta t \sum_{m=1}^{s}|b_m|\sum_{i=1}^{s}[\bm{D}^{-1}]_{mi}  {\|\tbfr^{k-\ell}_i(\tbfw_1^{k- \ell}, \ldots, \tbfw_s^{k-\ell}) \|_{2}} +\norm{\tbfr_{\bfx}^{k-\ell}(\tbfx^{k-\ell}) }_2\right),
\end{align*}
with 
\begin{align*}
c_1 :=  \left( 1+ \kappa \varDelta t \sum_{m=1}^{s}|b_m|\sum_{i=1}^{s}[\bm{D}^{-1}]_{mi} \right).
\end{align*}
Here, $\bfx^{k}$ respective $\bfx^{k}_r$ is a solution of \cref{Eq:FOM_RK}, \cref{Eq:ROM_RK} in time instance $k$ with update in \cref{Eq:state_FOM}, \cref{Eq:Update_ROM}. By $\tbfx^{k} := \bfxref + d(\bfx_r^{k})$ we denote the reconstructed state and by $\tbfw_i^{k} := \bm{D}_{\bfx_r} d(\bfx_r^{k-1})\bfw_{r,i}^{k},\  i = 1, \ldots, s$ the reconstructed velocities. The residuals w.r.t.\ the reconstructed state and velocities $\tbfr^{k}_i, \tbfr^{k}_{\bfx},$ are defined by
\begin{subequations}\label{Eq:RK_residual_ROM}
\begin{align}
\tbfr_{i}^{k}({\bfw}_1, \ldots, {\bfw}_s) :&= {\bfw}_i - X_{\cH}\bigg(\tbfx^{k-1} + \varDelta t \sum_{j=1}^{s}a_{ij} {\bfw}_j \bigg), \qquad i =1, \ldots,s, \\ 
\tbfr_{\bfx}^{k}({\bfx}) :&= {\bfx} - \tbfx^{k-1} -\varDelta t \sum_{i=1}^{s} b_i \tbfw_i^{k}.\label{Eq:state_red}
\end{align}
\end{subequations}
Note that if the mapping $d$ is linear, we have the standard Runge-Kutta update step \cref{Eq:state_FOM} for the reconstructed solution.
\end{mythm}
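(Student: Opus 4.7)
The plan is to reduce the claim to a one-step bound of the form $\|\bfx^k-\tbfx^k\|_2 \le c_1\|\bfx^{k-1}-\tbfx^{k-1}\|_2 + T_k$, with $T_k$ equal to the parenthesised quantity in the theorem, and then to iterate this recurrence via a discrete Gronwall argument to obtain the stated geometric-sum bound. The essential work is therefore to produce the one-step inequality; the $k$-fold iteration is routine once the recursion constant $c_1$ and the data $T_k$ have been identified.

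For the one-step bound I would first compare the stage velocities. For the FOM we have $\bfw_i^k = X_{\cH}(\bfx^{k-1}+\varDelta t\sum_j a_{ij}\bfw_j^k)$, while by rearranging \cref{Eq:RK_residual_ROM} we have $\tbfw_i^k = X_{\cH}(\tbfx^{k-1}+\varDelta t\sum_j a_{ij}\tbfw_j^k)+\tbfr_i^k(\tbfw_1^k,\ldots,\tbfw_s^k)$. Taking norms, using the triangle inequality and Lipschitz continuity of $X_{\cH}$ with constant $\kappa$, and abbreviating $\epsilon:=\|\bfx^{k-1}-\tbfx^{k-1}\|_2$, $E_i:=\|\bfw_i^k-\tbfw_i^k\|_2$, $R_i:=\|\tbfr_i^k(\tbfw_1^k,\ldots,\tbfw_s^k)\|_2$, I obtain the componentwise linear system
\begin{align*}
\sum_{j=1}^s \bm{D}_{ij} E_j \;\le\; \kappa\,\epsilon + R_i,\qquad i=1,\ldots,s.
\end{align*}
Assumption $(a1)$ makes $\bm{D}$ invertible and assumption $(a2)$ is exactly the monotonicity statement that allows me to invert this componentwise inequality to get $E_m \le \sum_{i=1}^s [\bm{D}^{-1}]_{mi}(\kappa\,\epsilon+R_i)$.

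With the stage-velocity bound in hand, I compare the state updates: the FOM satisfies \cref{Eq:state_FOM}, whereas from the definition of $\tbfr_{\bfx}^k$ in \cref{Eq:state_red} the reconstructed state satisfies $\tbfx^k = \tbfx^{k-1}+\varDelta t\sum_m b_m\tbfw_m^k + \tbfr_{\bfx}^k(\tbfx^k)$. Subtracting, taking norms, inserting the bound on $E_m$ and collecting the terms proportional to $\epsilon$ produces
\begin{align*}
\|\bfx^k-\tbfx^k\|_2 \;\le\; c_1\,\epsilon \;+\; \varDelta t\sum_{m=1}^s|b_m|\sum_{i=1}^s[\bm{D}^{-1}]_{mi}R_i \;+\; \|\tbfr_{\bfx}^k(\tbfx^k)\|_2,
\end{align*}
which is the desired one-step recursion. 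A standard induction on $k$ then unrolls this into the geometric series stated in the theorem.

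The main obstacle I anticipate is justifying the step that passes from the vector inequality $\bm{D}E\le\kappa\epsilon\mathbf{1}+R$ to $E\le\bm{D}^{-1}(\kappa\epsilon\mathbf{1}+R)$: this is not automatic because $\bm{D}^{-1}$ need not be nonnegative, and that is precisely why assumption $(a2)$ has been imposed. A secondary subtlety is bookkeeping for nonlinear $d$, since the reconstructed trajectory is \emph{not} an exact RK step of the FOM vector field; this is the reason the extra residual $\tbfr_{\bfx}^k$ enters the bound, and I would be careful to keep it explicit rather than trying to absorb it. Everything else, including the final discrete Gronwall inequality and the identification of the constant $c_1 = 1+\kappa\varDelta t\sum_m|b_m|\sum_i[\bm{D}^{-1}]_{mi}$, is a direct computation.
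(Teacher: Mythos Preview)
Your proposal is correct and follows essentially the same route as the paper: first bound the stage-velocity differences via the componentwise inequality $\bm{D}E\le \kappa\epsilon\mathbf{1}+R$ and invert it using assumptions $(a1)$--$(a2)$, then compare the state updates (picking up the extra residual $\norm{\tbfr_{\bfx}^{k}(\tbfx^{k})}_2$ from the nonlinearity of $d$), and finally iterate the resulting one-step recursion by induction. Your identification of the two delicate points---the role of $(a2)$ in justifying the inversion and the explicit appearance of $\tbfr_{\bfx}^{k}$ for nonlinear $d$---matches the paper exactly.
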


\begin{proof}
Subtracting $\tbfr_{i}^{k}(\tbfw_1^{k}, \ldots, \tbfw_s^{k})$ from $\bfr_{i}^{k}(\bfw_1^{k}, \ldots, \bfw_s^{k})$ yields

\begin{align*}
\bfr_{i}^{k}(\bfw_1^{k}, \ldots, \bfw_s^{k}) &- \tbfr_{i}^{k}(\tbfw_1^{k}, \ldots, \tbfw_s^{k}) = \bfw_i^{k} - \tbfw_i^{k} \\&- \left(  X_{\cH}\bigg(\bfx^{k-1} + \varDelta t \sum_{j=1}^{s}a_{ij} \bfw_j^{k}\bigg) -  X_{\cH}\bigg(\tbfx^{k-1} + \varDelta t \sum_{j=1}^{s}a_{ij} \tbfw^{k}_j\bigg) \right).
\end{align*}
Noting that $\bfr_{i}^{k}(\bfw_1^{k}, \ldots, \bfw_s^{k}) = 0$, we arrive at 
\begin{align*}
\bfw_i^{k} - \tbfw^{k}_i &= - \tbfr_{i}^{k}(\tbfw^{k}_1, \ldots, \tbfw^{k}_s) \\&+ \left(  X_{\cH}\bigg(\bfx^{k-1} + \varDelta t \sum_{j=1}^{s}a_{ij} \bfw_j^{k} \bigg) -  X_{\cH}\bigg(\tbfx^{k-1} + \varDelta t \sum_{j=1}^{s}a_{ij} \tbfw^{k}_j \bigg) \right).
\end{align*}
Taking the norm on both sides and using the Lipschitz continuity yields
\begin{align*}
\| \bfw_i^{k} - \tbfw^{k}_i \|_2 \le \| \tbfr_{i}^{k}(\tbfw^{k}_1, \ldots, \tbfw^{k}_s)  \|_2 + \kappa \| \bfx^{k-1} - \tbfx^{k-1} \|_2 + \kappa \varDelta t \sum_{j=1}^{s}|a_{ij}|  \| \bfw_j^{k} - \tbfw^{k}_j \|_2. 
\end{align*}
Rearranging both sides leads to
\begin{align*}
\| \bfw_i^{k} - \tbfw^{k}_i \|_2 - \kappa \varDelta t \sum_{j=1}^{s}|a_{ij}|  \| \bfw_j^{k} - \tbfw_j^{k} \|_2 \le \norm{\tbfr_{i}^{k}(\tbfw^{k}_1, \ldots, \tbfw^{k}_s) }_2 + \kappa \| \bfx^{k-1} - \tbfx^{k-1} \|_2. 
\end{align*}
As $\varDelta t $ is small enough s.t. assumptions (a1), (a2) hold, for $m=1, \ldots, s$, we arrive at
\begin{align}\label{Eq:upper_bound}
\| \bfw_m^{k}- \tbfw_m^{k} \|_2 \le \sum_{i=1}^{s} [\bm{D}^{-1}]_{mi} \Big( \| \tbfr_{i}^{k}(\tbfw_1^{k}, \ldots, \tbfw^{k}_s)  \|_2 + \kappa  \| \bfx^{k-1} - \tbfx^{k-1} \|_2 \Big).
\end{align}
Evaluating the residual \cref{Eq:state_red} at $\tbfx^{k}$ and adding a zero by reformulating \cref{Eq:state_FOM} yields
\begin{align*}
\| \bfx^{k} - \tbfx^{k}\|_{2} \le \| \bfx^{k-1} - \tbfx^{k-1} \|_{2} + \varDelta t \sum_{m=1}^{s} \abs{b_m} \|\bfw^k_m - \tbfw^k_m \|_2 + \norm{\tbfr_{\bfx}^{k}(\tbfx^{k}) }_2.
\end{align*}
By inserting \cref{Eq:upper_bound} it follows
\begin{align*}
\| \bfx^{k} - \tbfx^{k}\|_{2} &\le \left( 1+ \kappa \varDelta t \sum_{m=1}^{s}|b_m|\sum_{i=1}^{s}[\bm{D}^{-1}]_{mi} \right) \| \bfx^{k-1} - \tbfx^{k-1} \|_{2} \\
&+\varDelta t \sum_{m=1}^{s}|b_m|\sum_{i=1}^{s}[\bm{D}^{-1}]_{mi}  {\|\tbfr^k_i(\tbfw_1^{k}, \ldots, \tbfw_s^{k}) \|_{2}} +\norm{\tbfr_{\bfx}^{k}(\tbfx^{k}) }_2.
\end{align*}
Finally, an induction argument concludes the proof. 
\end{proof}

Note, that the Lipschitz condition on $X_{\cH}$ does not pose a severely limiting additional requirement, as it is a typical condition for the well-posedness of \cref{Eq:FOM_canonical}. 
 
%%%%%%%%%%%%%%%%%%%%%%%%%%%%%%%%%%%%%%%%
\section{Nonlinear Symplectic Trial Manifold based on Deep Convolutional Autoencoders}
\label{Sec:SymAuto}
%%%%%%%%%%%%%%%%%%%%%%%%%%%%%%%%%%%%%%%%

It is yet open how to construct a symplectic reconstruction function $d: \R^{2n} \rightarrow \R^{2N}$ and how to choose the reduced initial value $\bfx_{r,0}(\bfmu) \in \R^{2n}$ in \cref{Eq:ROM_canonical}. In the scope of this work, we investigate autoencoders for this purpose. In this section, we first introduce existing autoencoders and deep convolutional autoencoders (DCA). Furthermore, we introduce our novel \emph{weakly symplectic DCA}. Before we start describing the general idea of autoencoders, we briefly highlight that autoencoders are not the sole choice to define a reconstruction function.

\begin{myrem}[Alternatives to Autoencoders]\label{Rem:AlternativesToAutoencoders}
Instead of relying on an autoencoder to provide the reconstruction function $d$, one could consider using other approaches. One possibility would be the kernel principal component analysis (kPCA) \cite{Schoelkopf98}, which has the basic idea of embedding a nonlinear manifold in $\R^{N}$ into $\R^{\cN}$ with $\cN \gg N$. The corresponding mapping and dimension $\cN$ should be chosen such that the nonlinear manifold in $\R^{N}$ can be represented as a linear manifold in $\R^{\cN}$. Subsequently, a classical PCA is applied to identify the low-dimensional space. Additional methods to construct such a mapping are, e.g., the Gaussian process latent variable model (GPLVM) \cite{lawrence2003gaussian}, self-organizing maps \cite{kohonen1982self}, diffeomorphic dimensionality reduction \cite{NIPS2008_647bba34} and deep kernel networks \cite{wenzel_sdkn_turbulence}.
\end{myrem}

\subsection{Autoencoder}
Autoencoders \cites{Hinton96}[Chapter 14]{Goodfellow-et-al-2016} \ are one approach in the class of (nonlinear) dimensionality reduction methods that learn a low-dimensional representation of a given, finite dataset $\dataset \subset \R^N$, $\abs{\dataset} < \infty$.
An autoencoder $\cA$ is characterized by a pair of parametric mappings $\cA := \lb e(\cdot; \aeparam), d(\cdot; \aeparam) \rb$ which are called the encoder $e(\cdot; \aeparam): \R^N \rightarrow \R^n$ and the decoder $d(\cdot; \aeparam): \R^n \rightarrow \R^N$.
Typically both maps are artificial neural networks (ANNs) where $\aeparam \in \Theta \subseteq \R^{\naeparam}$ describes the network parameters such as the weights and biases.
The encoder--decoder pair is optimized to minimize the loss
\begin{align}\label{Eq:Loss_Data}
	\lossData(\aeparam) := \frac{1}{N\abs{\dataset}}\sum_{\bfx \in \dataset} \norm{\bfx - d( e(\bfx; \aeparam); \aeparam)}^2,
\end{align}
i.e., the data $\bfx \in \dataset$ should be invariant under compression with the encoder and subsequent decompression with the decoder. With this construction, autoencoders do not require labeled data which is known as unsupervised learning. The constant $N\abs{\dataset}$ normalizes the squared loss which is known as the mean squared loss (MSE). The training process identifies a (most often locally) optimal parameter $\saeparam \in \Theta$.

\subsection{Deep Convolutional Autoencoder}
DCAs \cite{LC2020A} choose the training data as $\dataset = \dataset(\cPtrain)$ with
\begin{align}\label{Eq:Dataset}
	\dataset(\cPtrain)
:= \{
	\bfx^k(\bfmu) - \bfx_0(\bfmu)
	\;\big|\;
	0 \leq k \leq K,\; \bfmu \in \cPtrain
\}
\end{align}
where $\bfx^k(\bfmu) \approx \bfx(t^k; \bfmu)$ is a numerical approximation for the $k-$th time step $t^k $, $0 \leq k \leq K$ and $\cP_\textnormal{train} \subset \cP$, $\abs{\cP_\textnormal{train}} < \infty$, is a finite training parameter set. Note that all initial values in \cref{Eq:Dataset} are shifted to $\bfzero \in \dataset$ by construction. Thus, the training of the DCA includes the condition $(d \circ e)(\bfzero) \approx \bfzero$.
Based on the optimized network parameters $\saeparam \in \Theta$, the reconstruction function is then defined by $d(\bfx_r) = d(\bfx_r; \saeparam)$ and the reduced initial value is set to $\bfx_{r,0}(\bfmu) \equiv \bfx_{r,0} = e(\bfzero; \saeparam)$, cf.\ \cite{LC2020A}.
The specific choice of the reduced initial value together with $(d \circ e)(\bfzero) \approx \bfzero$ leads to a reference state $\bfxref(\bfmu) = \bfx_0(\bfmu) - (d \circ e)(\bfzero) \approx \bfx_0(\bfmu)$.
Thus, the reconstruction function $d$ has to describe deviations from the initial value in \cref{Eq:reconstructed_solution} which is consistent with the choice of training data in \cref{Eq:Dataset}.
Note that the training of an autoencoder for model reduction purposes uses the same data as the snapshot-based basis generation techniques in classical model reduction ($d$ linear), see \cite{LC2020A}.

For high-dimensional data, $N \gg 1$, DCAs purely based on fully-connected ANNs (FNNs) might be too expensive to be trained since $\naeparam \sim N$ or even $\naeparam \sim N^2$.
Moreover, FNNs do in general not make use of spatial correlation in the data.
For such problems, the use of convolutional ANNs (CNNs) \cites{lecun1998gradient}[Chapter 9]{Goodfellow-et-al-2016} is advantageous. These networks are based on the so-called cross-correlation which applies a filter matrix similar to the discrete convolution.
The dimension of the filter matrix is independent from the dimension of the input which makes it suitable for high-dimensional inputs.
Moreover, this operation is invariant with respect to translations due to its convolutional structure.

\subsection{Weakly Symplectic Deep Convolutional Autoencoder}

In general the decoder $d$ of the autoencoder learned with the DCA method will not be symplectic as required in \cref{Eq:Canonical_sympl}. Thus, we extent the DCAs to weakly symplectic DCAs by adding this constraint additionally to our loss function
\begin{subequations}
\begin{align}
\label{Eq:Loss}
	\loss(\aeparam) :=&\; \alpha \lossData(\aeparam) + (1-\alpha) \lossSympl(\aeparam),\\
\label{Eq:Loss_Sympl}
\lossSympl(\aeparam)
:=&\; \frac{1}{(2n)^2\abs{\dataset}}
\sum_{\bfx \in \dataset} 
\norm{
	({ \left.\Jac{\cdot; \aeparam}\right|_{e(\bfx; \aeparam)} }\rT)
	\J_{2N}
	\left.\Jac{\cdot; \aeparam}\right|_{e(\bfx; \aeparam)} {-} \J_{2n}
}_\textnormal{F}^2,
\end{align}
\end{subequations}
where $0 < \alpha < 1$ is a hyperparameter, $\norm{\cdot}_\textnormal{F}$ is the Frobenius norm, $(2n)^2\abs{\dataset}$ is again a normalizing constant and $\left.\Jac{\cdot; \aeparam}\right|_{e(\bfx; \aeparam)}$ should denote the evaluation of the Jacobian $\Jac{\cdot; \aeparam}$ at $e(\bfx; \aeparam)$.
With this choice, network parameters $\aeparam$ are penalized for which $d(\cdot; \aeparam)$ violates the canonical symplecticity \cref{Eq:Canonical_sympl} at the encoded data points $e(\bfx; \aeparam)$ for $\bfx \in \dataset$.

Note that the symplecticity loss \cref{Eq:Loss_Sympl} is more expensive to compute than the data loss \cref{Eq:Loss_Data} as it requires to compute the Jacobian of $d$.
In our framework, we use the Jacobian--vector product in \texttt{pyTorch} \cite{NEURIPS2019_bdbca288} to compute the Jacobian.
For now, we ignore this additional cost as it is part of the training phase and it does not influence the computational cost to evaluate the encoder and its Jacobian after the training.
Future work might investigate how to reduce the cost in \cref{Eq:Loss_Sympl}, e.g., by introducing another stochastic approximation such as Hutchinson's Trace Estimator~\cite{Hutchinson1989}.

%%%%%%%%%%%%%%%%%%%%%%%%%%%%%%%%%%%%%%%%
\section{Numerical Results}
\label{Sec:NumExp}
%%%%%%%%%%%%%%%%%%%%%%%%%%%%%%%%%%%%%%%%
In this section, we perform numerical simulations on a linear wave equation; a problem with known slowly decaying Kolmogorov-$n$-widths, see e.g., \cite{greif2019decay, glas2020reduced}. First, we detail our problem and write it in Hamiltonian form. Then, we describe the autoencoder training and the construction of the reduced model. Subsequently, we compare our reduced model on the nonlinear symplectic trial manifold to classical (non-)symplectic model reduction and to non-symplectic model reduction on manifolds. We investigate the preservation of energy and symplecticity of the reduced model. 

\subsection{Model Data}
\label{Sec:exp_lin_wave}
We consider a parametrized one-dimensional linear wave equation with homogeneous Dirichlet boundary conditions
\begin{subequations} \label{eq:lin_wave}
\begin{align}
	\partial^2_{tt} u(t, \xi; \mu)
=&\; \mu^2 \partial^2_{\xi\xi} u(t, \xi; \mu), && \text{on } I \times \varOmega, \\
	u(0, \xi; \mu)
=&\; u_0(\xi; \mu), && \forall \, \xi \in \varOmega, \\
	\partial_t u(0, \xi; \mu)
=&\; -\mu \; \partial_\xi u_0(\xi; \mu), && \forall \, \xi \in \varOmega, \\
	u(t, \xi; \mu) =&\; 0, && \forall \,t \in I, \xi \in \{-1/2 , 1/2\},
\end{align}
\end{subequations}
with $\varOmega = (-1/2, 1/2)$ and $I = (0,1)$. The parameter $\mu\in \cP:=[5/12,5/6] \subset \R$ is chosen to be the wave speed. The initial value $u_0(\xi; \mu) := h\lb s(\xi;\mu) \rb$ is based on the spline function
\begin{align*}
	h(s(\xi;\mu))
:= \begin{cases}
1 - 3/2 \cdot s(\xi;\mu)^2 + 3/4 \cdot s(\xi;\mu)^3,& 0 \leq s(\xi;\mu) \leq 1,\\
(2-s(\xi;\mu))^3/4,& 1 < s(\xi;\mu) \leq 2,\\
0,& \text{otherwise},\\
\end{cases}
\end{align*}
with $s(\xi;\mu) := 4/\mu \cdot \abs{\xi + 1/2 - \mu/2}$ for which the unique solution is a traveling wave solution $u(t, \xi; \mu) = u_0(\xi - \mu t; \mu)$.
This model problem is similar to \cite{PM2016,AH2017} and very challenging as it describes the transport of a thin pulse.
We rewrite \cref{eq:lin_wave} in canonical form and change the variables to $q(t, \xi; \mu):=u(t, \xi; \mu)$ as well as $p(t, \xi; \mu):=\partial_t q(t, \xi; \mu) = \partial_t u(t, \xi; \mu)$ and arrive at the Hamiltonian formulation
\begin{align}\label{eq:ham_eq}
\partial_t q(t,\xi; \mu) = p(t,\xi;\mu), \quad  \partial_t p(t,\xi;\mu) = -  \mu^2 \partial_{\xi\xi} q(t, \xi;\mu),
\end{align}
with corresponding continuous Hamiltonian
\begin{align*}
\cH_{\textnormal{cont}}(q,p;\mu) := \frac{1}{2} \int_{\varOmega} \mu^2 (\partial_{\xi} q(t,\xi;\mu))^2 + p(t,\xi;\mu)^2 \textd\xi.
\end{align*}
We discretize the inner of $\varOmega$ into $N=2048$ equidistantly spaced points $\xi_i:= i\varDelta_\xi - 1/2$ for $i = 1, \ldots, N$, $\varDelta_\xi:= 1/(N+1)$ and $q_i(t;\mu) := q(t,\xi_i;\mu) $, $p_i(t;\mu) := p(t,\xi_i;\mu)$, $i = 1, \ldots N$. Discretizing \cref{eq:ham_eq} using a finite difference method, we arrive at the FOM
\begin{align*}
	\ddt \bfx(t;\mu) = \J_{2N} \nabla_{\bfx}\cH (\bfx(t;\mu);\mu), 
\end{align*}
with $\bfx(t;\mu) := \rT{(q_1(t;\mu), \ldots, q_N(t;\mu),p_1(t;\mu), \ldots, p_N(t;\mu))}\in \R^{2N}$ and the discrete Hamiltonian
\begin{align*}
	\cH(\bfx(t;\mu);\mu):= \frac{1}{2} \rT{\bfx(t;\mu)}  \begin{pmatrix} -\mu^2 \bm{D}_{\xi \xi} & \bm{0}_N \\ \bm{0}_N & \mathbb{I}_{N} \end{pmatrix} \bfx(t;\mu),
\end{align*}
where $ \bm{D}_{\xi \xi}$ is the central finite difference approximation for the derivative $\partial_{\xi\xi}$. 
For the discretization in time, we need a symplectic integrator in order to preserve the symplecticity of the Hamiltonian system. To this end, we use the implicit midpoint rule \cref{eq:impl_midpoint} with $K=4000$ steps, such that $\varDelta t:= \frac{1}{K}, t^{k}:= k\varDelta t, 0 \le k \le K$, and seek an approximation $\bfx^k(\mu)\approx \bfx(t^{k};\mu)$. The training set $\cP_{\textnormal{train}}$ is chosen as 8 equidistant points in $\cP$ which results in $\abs{\cP_{\textnormal{train}}} \cdot K = 32000$ snapshots.

\subsection{Autoencoder Training}
In the following we discuss all details on architecture and training of the autoencoders used in the experiment. The settings specific for the different autoencoders are listed in \cref{Table:Training_Settings}. We assume that the reader is familiar with the basics of ANNs. For a more detailed introduction, we refer, e.g., to \cite{LC2020A}.

For the training, the snapshot data is randomly split into two disjoint sets, (i) training data $\datasetTrain$ ($80\%$) and (ii) validation data $\datasetVal$ ($20\%$).
The network architectures are (weakly symplectic) DCAs as described in \cref{Sec:SymAuto}. For nine different reduced sizes $2n \in \{2,4,\dots,12,18,24,30 \}=: \reddims$, we investigate a weakly symplectic DCA $\cAEs{2n}$ architecture, a non-symplectic copy $\cAE{0}{2n}$ with $\alpha = 1$ in \cref{Eq:Loss}, and a completely different non-symplectic DCA architecture $\cAE{1}{2n}$. The hyperparameters for $\cAE{s,0}{2n}$ were decided in a hyperparameter optimization run for $250$ different hyperparameters (with $\alpha < 1$) by minimizing the loss \cref{Eq:Loss} on the validation set $\datasetVal$ whereas the DCAs $\cAE{1}{2n}$ were determined analogously in a separate run with $\alpha=1$ over $400$ candidate hyperparameters.
For all different reduced dimensions $2n$, the number of network parameters $\naeparam$ is between $93,000$ and $101,000$ for $\cAE{s,0}{2n}, \cAE{0}{2n}$ and between $248,000$ and $257,000$ for $\cAE{1}{2n}$.

Both, encoder and decoder, are composed of multiple layers that are evaluated sequentially (see \cref{Fig:Architecture}). The sizes of the intermediate results are indicated in the upper part. Sizes with an $\times$-symbol indicate that the result is a second-order tensor. For such results, we call the first dimension the \emph{channels} and the second dimension the \emph{length}.
The outermost layers ($\splitting$, $\flatten$, dotted boundary around layers) convert the state $[\bfq; \bfp] \in \R^{2N}$ to a second-order tensor $\rTsb{\bfq, \bfp} \in \R^{2\times N}$ (or $\R^{2\times N} \rightarrow \R^{2N}$, respectively) such that both physical quantities, the configuration variables $\bfq$ and the conjugated momenta $\bfp$, occupy one separate channel.
The next part is framed by a scaling operation and its inverse ($\scale$, $\scale^{-1}$, dashed boundary around layers). The scaling is chosen to scale each channel in the input data separately to the interval $[0,1]$.
This operation is typically applied directly to the data.
In our approach, this operation is adopted in the network in order to ensure that this operation is respected in the symplecticity loss \cref{Eq:Loss_Sympl}.
The other layers in \cref{Fig:Architecture} are, in order, a block of convolutional layers ($\convblock$), a flatten operation, a block of fully-connected layers ($\fullblock_e$) in the encoder, a block of fully-connected layers ($\fullblock_d$) in the decoder, a splitting operation and a block of transposed convolutions ($\convTblock$).
Each block consists of alternating (transposed) convolutional layers (or fully-connected layers, respectively) and nonlinear activation layers. The nonlinear activation function in all examples is the Exponential Linear Unit (ELU)
\begin{align*}
	&\sigma(x)
:= \begin{cases}
	x, &x \geq 0,\\
	\exp(x)-1,& x < 0.
\end{cases}
\end{align*}

\begin{figure}
	\includegraphics[height=\textwidth, angle=90]{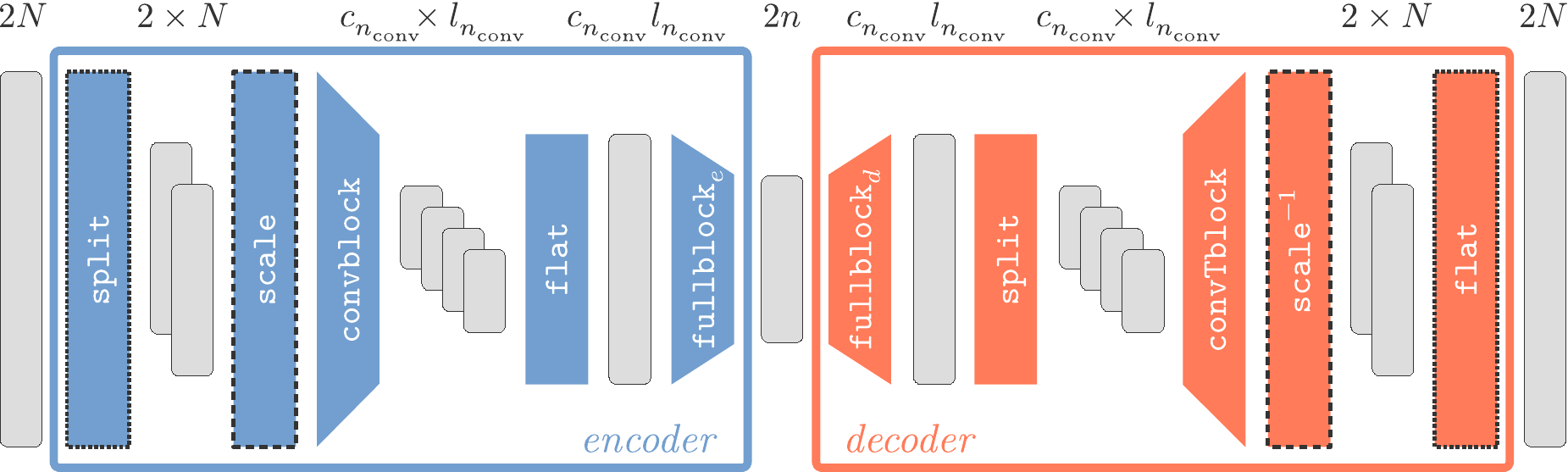}
	\caption{Schematic description of the network architecture of general DCAs composed of an encoder (blue) and a decoder part (orange). The sizes of the intermediate results (light gray) are indicated in the upper part. Sizes with an $\times$-symbol indicate that the result is a second-order tensor.}
	\label{Fig:Architecture}
\end{figure}

The hyperparameters for a (transposed) convolutional layer are the kernel size, the stride value $\stride_i$, the number of output channels $c_i$ and the padding. The length of the output $\lconv_{i+1} \in \N$ of a (transposed) convolutional layer depends on all those hyperparameters and the length of the input $\lconv_{i}$. The hyperparameters for the fully-connected layers are the number of neurons described by the lengths $\lfull_i$. We give an overview of the hyperparameters and the resulting lengths for each autoencoder in \cref{Table:Training_Settings}. Layer-specific hyperparameters are denoted as a vector $\bfv = [v_0, \dots, v_m]$. 

We use mini-batching in combination with the optimizer ADAM \cite{kingma2014adam} with standard parameters except for the learning rate, which is given together with the batch size in \cref{Table:Training_Settings} for each DCA. The batches are reshuffled in every epoch. The DCAs are trained for $1000$ epochs where one epoch is a full iteration over all training batches. The final parameter $\saeparam$ is chosen as the minimizer of the total loss \cref{Eq:Loss} over the validation data set $\datasetVal$ within those $1000$ epochs.
All biases are initialized with zeros. The weights are initialized with the method and corresponding distribution listed in \cref{Table:Training_Settings}.
The DCAs are implemented with the \texttt{pyTorch} \cite{NEURIPS2019_bdbca288} framework. Moreover, the floating point precision is set to \texttt{double}-precision, as the FOM and ROM work with this precision.

\begin{table}
%!TEX root = ../Symplectic_manifold_MOR.tex

\begin{tabular}{l||c|c|c}
	\textbf{setting} & $\quad\;\;\bcAEs{2n}\quad\;\;$ & $\bcAE{0}{2n}$ & $\bcAE{1}{2n}$\\
\hline
	weight $\alpha$ in \cref{Eq:Loss} & 0.9 & \multicolumn{2}{c}{1.}\\
\hline
	num. conv. layer $\nconv$
& \multicolumn{2}{c|}{6}
& 5 \\
\hline
	conv. channels $\csconv$
& \multicolumn{2}{c|}{$[2,2,4,8,16,32,64]$}%{$2^{[1,1,2,3,4,5,6]}$}
& $[2,4,8,16,32,64]$\\% $2^{[1,2,3,4,5,6]}$ \\
\hline
	lengths, conv. layer $\lsconv\!\!$
& \multicolumn{2}{c|}{$\!\![2048,512,256,128,64,16,2]\!\!$}%$2^{[11,9,8,7,6,4,1]}$}
&$\!\![2048,1024,512,256,128,4]\!\!\!$\\% $2^{[11,10,9,8,7,2]}$ \\
\hline
	stride $\strides$
& \multicolumn{2}{c|}{$[4,2,2,2,4,8]$}%$2^{[2,1,1,1,2,3]}$}
& $[2,2,2,2,32]$\\%$2^{[1, 1, 1 ,1, 5]}$ \\
\hline
	num. full layer $\nfull$
& \multicolumn{2}{c|}{1}
& \multicolumn{1}{c}{2} \\
\hline
	lengths, full layer $\lsfull$
& \multicolumn{2}{c|}{[$128, 2n$]}
& \multicolumn{1}{c}{[$256, 132, 2n$]} \\
\hline
	learning rate
& \multicolumn{2}{c|}{\eNum{4.43}{-4}}
& \eNum{1.05}{-4}\\
\hline
	batch size
& \multicolumn{2}{c|}{15}
& \multicolumn{1}{c}{25}\\
\hline
	initialization
& \multicolumn{2}{c|}{Kaiming normal \cite{he2015delving}}
& \multicolumn{1}{c}{Xavier uniform \cite{pmlr-v9-glorot10a}}\\
\end{tabular}
\centering
\caption{Autoencoder-specific settings for the weakly symplectic DCAs $\cAEs{2n}$ and the DCAs $\cAE{0}{2n}$, $\cAE{1}{2n}$. Note, that the DCAs $\cAEs{2n}$ and $\cAE{0}{2n}$ share all parameters except for the weight $\alpha$. We only report the layout of the encoder part as the decoder uses a mirrored layout.}
\label{Table:Training_Settings}
\end{table}

\subsection{Online Evaluation} The DCAs described in the previous section are compared for three fixed parameter instances $\muGenA := 0.51$, $\muGenB = 0.625$, $\mu_3=0.74$ that are all not contained in the training set $\cP_\textnormal{train}$, i.e., we investigate how well the DCAs generalize to unseen data. 
For the weakly symplectic DCAs, we use the SMG reduction technique introduced in \cref{Sec:SMG}. 
For the non-symplectic DCAs, we use the manifold Galerkin (MG) and the manifold LSPG (M-LSPG), both introduced in \cite{LC2020A}. 
Additionally, we display the results for a classical symplectic MOR method, the cotangent lift (CL) with symplectic Galerkin (SG) projection, which was observed to be the best reduction technique in the experiments for the linear wave equation in \cite{PM2016}.
Moreover, we present results for the proper orthogonal decomposition (POD) w.r.t.\ Galerkin (G) and least-squares Petrov-Galerkin (LSPG) projection, which are classical (non-symplectic) MOR techniques.
The different reduction techniques are summarized in \cref{Table:Reductions}.
\begin{table}
\begin{tabular}{l||c|c|c|c}
	\textbf{reduction method}
& \textbf{$d$ lin.}
& \textbf{sympl.}
& \textbf{approx.}
& \textbf{ref.} \\

\hline
	Symplectic Manifold Galerkin (SMG)
& no
& yes
& $\cAEs{2n}$
&  \Cref{Sec:SMG} \\
\hline
	Manifold Galerkin (MG)
& no
& no
&  \multirow{2}*{$\cAE{0}{2n}, \cAE{1}{2n}$}
&  \cite{LC2020A} \\
\cline{0-2} \cline{5-5}
	Manifold LSPG (M-LSPG)
& no
& no
&
& \cite{LC2020A} \\
\hline
	Symplectic Galerkin (SG)
& yes
& yes
& $\dVCL{2n}$ 
& \cite{PM2016}\\
\hline 
	POD--Galerkin (G)
& yes
& no
& \multirow{2}*{$\dVPOD{2n}$} 
&  \cite{kunisch2001galerkin} \\
\cline{0-2} \cline{5-5}
	least-squares Petrov-Galerkin (LSPG)
& yes
& no
& 
& \cite{carlberg2011efficient} \\
\end{tabular}
\centering
\caption{Summary of all investigated reduction techniques.} 
\label{Table:Reductions}
\end{table}

\cref{Fig:proj_rom} shows the projection error
\begin{align}
	\eproj(\mu)
&:= \sqrt{\frac{
\sum_{k=0}^{K} \norm{\bfxref(\mu) + (d \circ e)(\bfx^k(\mu) - \bfxref(\mu)) - \bfx^k(\mu)}^2
}{
\sum_{\bfx \in \dataset(\{ \mu \})} \norm{\bfx}^2
}}\; \label{eq:errs_proj}	
\end{align}
and the reduction error
\begin{align}
	\ered(\mu)
&:= \sqrt{\frac{
\sum_{k=0}^{K} \norm{\bfxref(\mu) + d(\bfx_r^k(\mu)) - \bfx^k(\mu)}^2
}{
\sum_{\bfx \in \dataset(\{ \mu \})} \norm{\bfx}^2
}} \label{eq:errs_red}
\end{align}
where $\bfx^k(\mu)$ is the solution of the FOM and $\bfx_r^k(\mu)$ is the corresponding reduced-order solution at time $t^k$.
The errors are displayed for the DCAs from \cref{Table:Training_Settings} and the classical approximations $\dVPOD{2n}$, $\dVCL{2n}$ each with its corresponding reduction technique(s) from \cref{Table:Reductions} for different reduced dimensions $2n \in \reddims$.
The projection error is a measure how well a solution can be approximated by the respective DCA whereas the reduction error shows the error of the corresponding ROM. The errors are considered separately for Galerkin-based (\cref{Fig:proj_rom}, left) and LSPG-based (\cref{Fig:proj_rom}, right) projection techniques. Errors are not depicted, (a) if the error is above $400\%$ or (b) if the corresponding ROM simulation run did not reach the absolute tolerance of $\eNum{1}{-8}$ within the maximum number of $15$ quasi-Newton iterations for some time step $t^k$, $0 \leq k < K$.

\begin{figure}[ht!]
\begin{center}
	\includegraphics{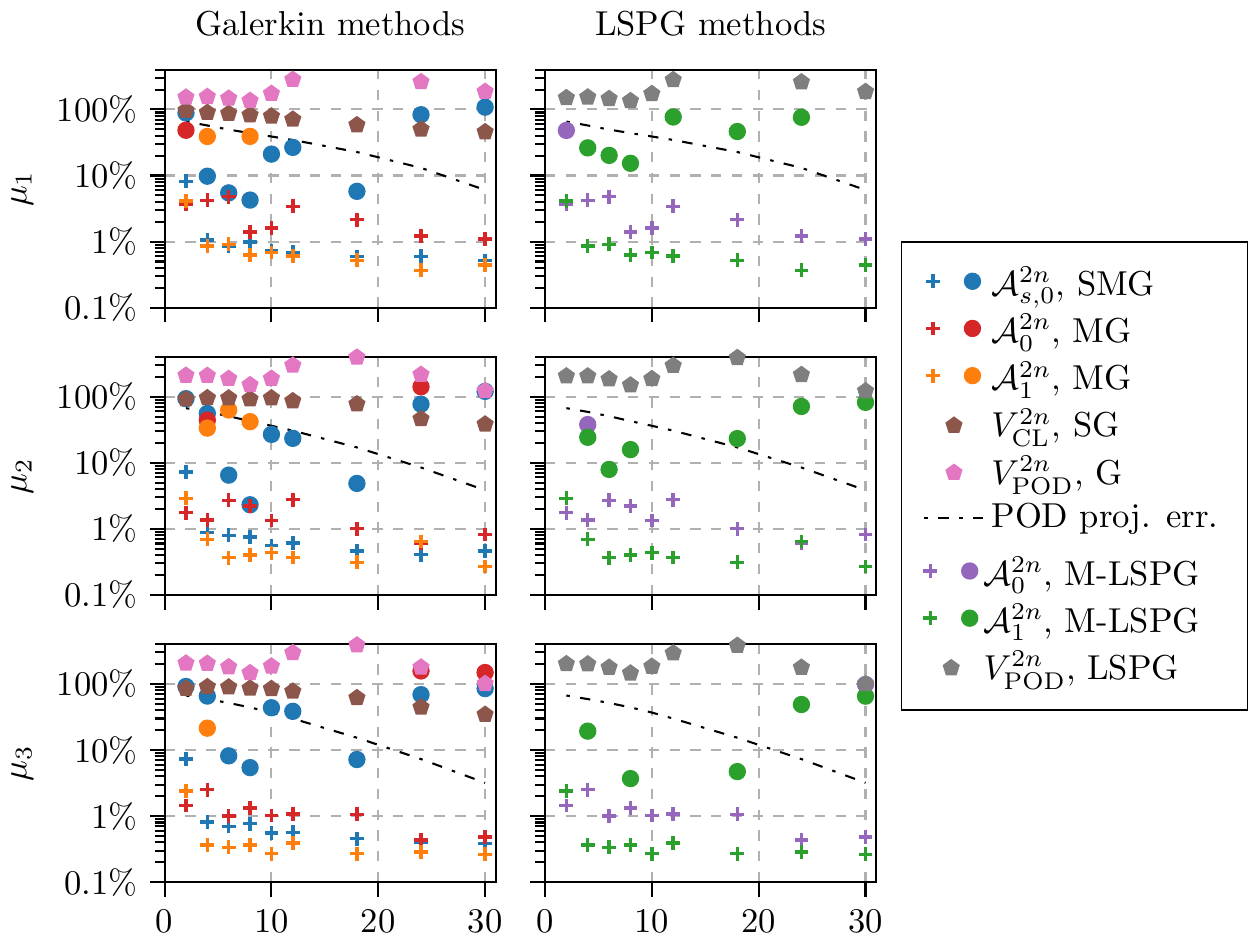}
	\caption{Reduction error $\ered(\mu)$ \cref{eq:errs_red} (circles and pentagon-symbols) and projection error $\eproj(\mu)$ \cref{eq:errs_proj} (plus-symbols and dash-dotted line) for three generalization parameters $\muGenA=0.51$ (top), $\muGenB=0.625$ (middle), $\muGenC=0.74$ (bottom) for different DCAs from \cref{Table:Training_Settings}, and reduction techniques from\cref{Table:Reductions}. We consider Galerkin (left) and LSPG methods (right) separately.}
\label{Fig:proj_rom}
\end{center}
\end{figure}
First, we comment on the classical MOR methods, i.e., CL with SG and POD with G and LSPG.
As expected from theory, the reduction error $\ered(\mu)$ of all linear, Galerkin-based methods (depicted by pentagon-symbols) is always bounded from below by the projection error $\eproj(\mu)$ of the POD basis (black dash-dotted line).
Moreover, all classical reduction methods show poor results in the reduction error, which is around or above $100\%$ for most $2n \in \reddims$. Overall, no reduction error is achieved below $34\%$ with classical methods for $2n \in \reddims$.
Therefore, in this setting classical MOR methods are not able to beneficially reduce the model for the considered reduced dimensions.

In contrast to that, all DCAs $\cAEs{2n}$, $\cAE{0}{2n}$, $\cAE{1}{2n}$ attain lower projection errors $\eproj(\mu)$ (plus-symbols) than the projection error $\eproj(\mu)$ for the POD basis for all configurations $\mu_i$, $i = 1,2,3$, $2n \in \reddims$, and a lower reduction error $\ered(\mu)$ (circles) for at least one configuration. The reduction errors provide results below and above the POD projection error, which aligns with previous results of performing model reduction on manifolds using autoencoders, see \cite{LC2020A}. For the Galerkin methods (left) with $4\le 2n < 24$, $\cAE{s,0}{2n}$ with SMG is always the best technique. Particularly for $2n \in \{6,8,18\}$ good reduction errors (below $10\%$) are obtained for all $\mu_i$, $i=1,2,3$. For the LSPG methods (right) only $\cAE{1}{2n}$, hyperoptimized for the non-symplectic setting, yields good approximations. However, even with $\cAE{1}{2n}$, no configuration yields errors below $10\%$ consistently for all considered parameters for all $\mu_i$, $i=1,2,3$. In that sense, the reduced models yield less reliable results than $\cAEs{2n}$ with SMG. Overall, the best reduction error is given by $\cAE{s,0}{2n}$ with SMG for $\muGenB$ with $2n=8$ with $2.3 \%$.

In comparison to the results from \cite{LC2020A} for the Burgers' equation, it is, however, surprising that the discrepancy between the reduction error and the projection error is very high for M-LSPG. This indicates that the lack of structure might impact M-LSPG which leads to higher reduction errors.\footnote{The MG and M-LSPG in our implementation have been validated with the Burgers' example from \cite{LC2020A} which leads to this conclusion.}

\begin{figure}[t!]
\begin{center}
	\begin{center}
	\includegraphics{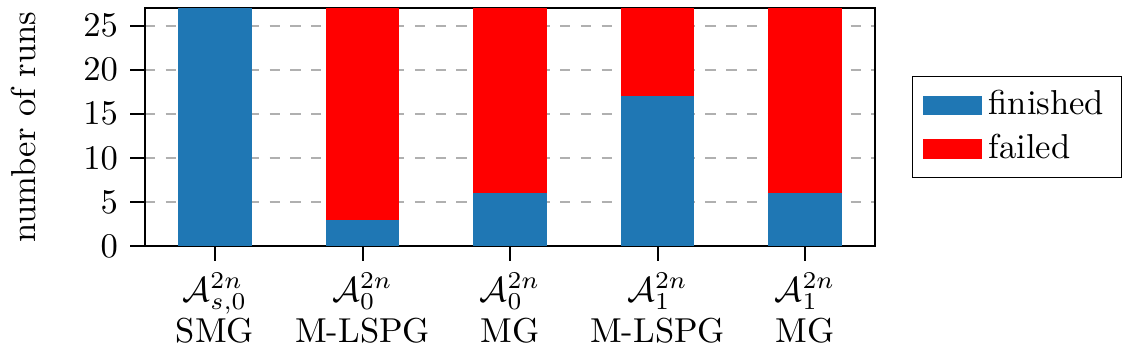}
	\end{center}
	\caption{Number of successful (blue) compared to failed (red) reduced simulation runs for $\cAE{s,0}{2n}, \cAE{0}{2n}, \cAE{1}{2n}$ with respective reduction technique, for $\mu_i$, $i=1, 2,3$ and $2n \in \reddims$.}
\label{Fig:bar_plot}
\end{center}
\end{figure}

For non-symplectic DCAs, we observe that non-converging reduced simulations occur, whereas all runs converge for the weakly symplectic DCAs, see \cref{Fig:bar_plot}. 
Out of $27$ simulations (9 reduced dimensions, 3 parameters) for $\cAE{1}{2n}$ , $6/27$ runs converged for MG and $17/27$ converged for M-LSPG. For $\cAE{0}{2n}$, these numbers are even lower with $6/27$ converged simulations for MG and $3/27$ converged simulations for M-LSPG. This means that, even in the best case, more than one third of the runs did not converge for non-symplectic DCAs. Thus we emphasize the importance of structure-preservation in model reduction. 

In \cref{fig:symplecticity}, we depict the error in the symplecticity of $\cAEs{2n}$ with
\begin{align} \label{eq:symplec_measure}
\esymp(t^k; \mu) := \frac{1}{(2n)^2}\| (\JacC_{\bfx_{r}}d(\bfx_r^k(\mu)))^{\textnormal{\textsf{T}}} \J_{2N} \JacC_{\bfx_{r}}d(\bfx_r^k(\mu)) - \J_{2n} \|_\textnormal{F}^{2},
\end{align}
where $\bfx_r^k(\mu)$ denotes the solution of the SMG-ROM.
It shows that the symplecticity is worst for $2n=2$ which we attribute to a too low reduced dimension in order to capture the essential properties of the solution manifold.
The error in the symplecticity behaves best for $2n \in \{ 6,8 \}$ and tends to increase for higher reduced dimensions.
The degeneration in the symplecticity might be an explanation for the increase in the reduction error (\cref{Fig:proj_rom} on the left).
We, thus, suspect that DCAs with improved symplecticity could yield more stable reduction errors.
The following remark briefly comments on a possibility to enhance symplecticity in DCAs.

\begin{myrem}\label{Rem:SympNet}
Another possibility to design an autoencoder which defines a symplectic reconstruction function $d$ would be a combination of the DCA-concept with the idea of SympNets introduced in \cite{Jin2020SympNetsIS} (which has been proposed in a similar fashion earlier, e.g., in \cite{Deco1995a}).
With such an approach, the symplecticity is encoded in the architecture of the autoencoder instead of implementing it with an additional loss term that penalizes non-symplectic DCAs (as in \cref{Eq:Loss_Sympl}).
In the scope of the present work, we opted for the latter approach to be able to compare the SMG-ROM with the MG-ROM on comparable architectures. 
\end{myrem}

\begin{figure}[t!]
\begin{center}
	%---------- 
	\subfigure[\label{fig:symplecticity} Error in symplecticity $\esymp(t^k; \muGenA)$ \cref{eq:symplec_measure}.]{%
	\includegraphics{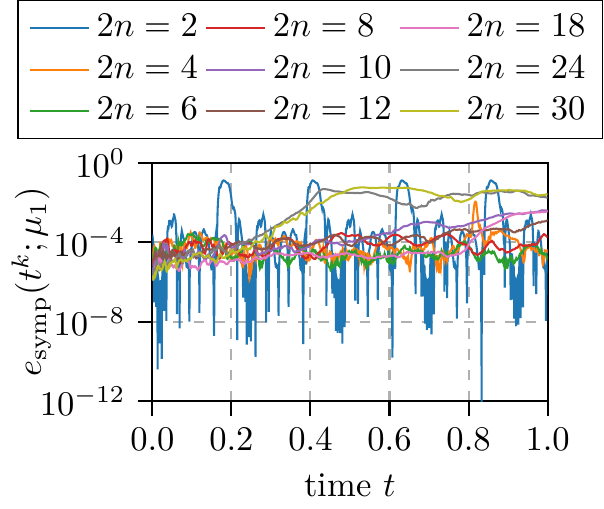}%
	}
	%----------
	\subfigure[\label{fig:Hamiltonian} Error in Hamiltonian $\varDelta\mathcal{H}(t^k; \muGenA)$ \cref{eq:err_in_hamiltonian}.]{%
	\includegraphics{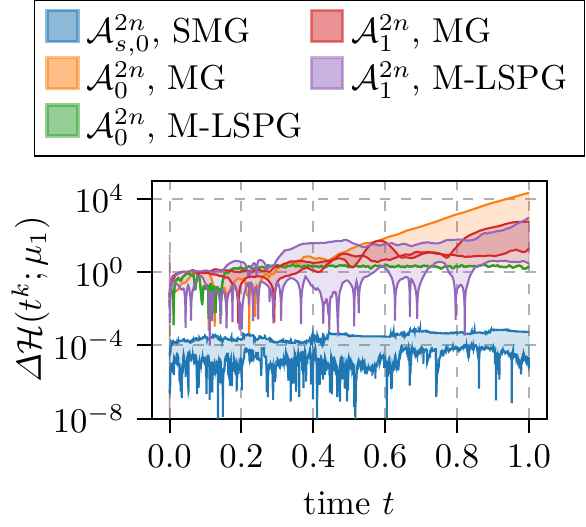}%
	}
	\caption{Error in the symplecticity $\esymp(t^k; \muGenA)$ \cref{eq:symplec_measure} of $\cAEs{2n}$ for $2n \in \reddims$, $t \in {[0,1]}$ (left) and variation for the error in the Hamiltonian $\varDelta\mathcal{H}(t^k; \muGenA)$ \cref{eq:err_in_hamiltonian} of $\cAE{s,0}{2n}, \cAE{0}{2n}, \cAE{1}{2n}$, for $2n \in \reddims$, $t \in {[0,1]}$ (right). For $\cAE{0}{2n}$ with M-LSPG (green) only one simulation run converged such that there is a line instead of two lines enclosing a shaded area. For $t\ge0.3$, the lower yellow and green line coincide.}
\label{Fig:Hamiltonian_symplecticity}
\end{center}
\end{figure}

In \cref{fig:Hamiltonian}, we plot the evolution of the error in the Hamiltonian $\varDelta \cH(t;\muGenA)$ \cref{eq:err_in_hamiltonian} for the DCAs $\cAEs{2n}, \cAE{0}{2n}$ and $\cAE{1}{2n}$ (with SMG, MG, M-LSPG, whenever applicable). It shows the minimum and maximum of $\varDelta \cH(t;\muGenA)$ over different reduced dimensions $2n \in \reddims$ for each time step $t^k$, $0 \leq k \leq K$, as solid lines and the area between both is shaded in the same color.
Non-converging runs (red in \cref{Fig:bar_plot}) are excluded, which is in favor of the non-symplectic DCAs $\cAE{0}{2n}, \cAE{1}{2n}$.
We observe that the error in the Hamiltonian for small times $t\approx 0$ is very low in all cases. This is due to the specific choice of the reference value $\bfxref(\bfmu)$ in \cref{rem:exact_reproduction} which makes the DCAs to match the initial value, and thus the Hamiltonian at $t=0$, exactly. All runs of the weakly symplectic DCAs $\cAEs{2n}$ are able to retain a low error in the Hamiltonian ($\varDelta\mathcal{H}(t^k; \muGenA) < \eNum{5.5}{-4}$).
For the non-symplectic DCAs $\cAE{0}{2n}, \cAE{1}{2n}$, the error in the Hamiltonian is much higher.
In the best case, the minimum error in the Hamiltonian at $t=1$ is about $2$.
In that sense, the additional constraint of weak symplecticity in \cref{Eq:Loss_Sympl} in combination with the SMG projection shows its impact in this experiment with an improved preservation of the Hamiltonian compared to the non-symplectic DCAs.

%%%%%%%%%%%%%%%%%%%%%%%%%%%%%%%%%%%%%%%%
\section{Conclusion}
\label{Sec:conclusions}

In this paper, we proposed a method for symplectic model reduction on manifolds. In particular, we developed the SMG projection technique, which projects a Hamiltonian system onto a nonlinear symplectic trial manifold, resulting in a low-dimensional Hamiltonian system. Additionally, we presented a rigorous error estimator and proved that the reduced model preserves energy and, under mild assumptions, also stability in the sense of Lyapunov. By using a novel weakly symplectic DCA for the construction of the reduced model, we were able to promote symplecticity in the reduced model. Note, that for the generation of the reduced model, the same snapshots were used as for the classical symplectic model reduction. Numerical simulations for a challenging case of a moving pulse governed by a wave equation showed that the SMG approach is superior over classical (symplectic) model reduction in the sense that a very low dimension of the manifold is enough to arrive at a low reduction error. Moreover, numerical experiments indicated that model reduction on manifolds, which does not promote symplecticity, might not yield a good reduced solution - similar to the comparison of symplectic model reduction to classical model reduction for Hamiltonian systems, see \cite{PM2016,AH2017}. We note that our method inherits similar drawbacks as the method in \cite{LC2020A}, i.e., requiring more hyperparameters than classical model reduction due to the autoencoder architecture and no full offline--online separation. To address the latter, future work will focus on including a hyper-reduction framework motivated by \cite{KCWZ2020B}. Additional future work will also include designing a novel symplectic autoencoder architecture based on SympNets \cite{Jin2020SympNetsIS}, such that symplecticity can directly be included in the architecture of the autoencoder, see \cref{Rem:SympNet}.\\[1em]

%%%%%%%%%%%%%%%%%%%%%%%%%%%%%%%%%%%%%%%%

\acknowledgements{S.~Glas acknowledges support from the Simons Foundation in the Collaboration on Hidden Symmetries and Fusion Energy. P.~Buchfink and B.~Haasdonk are funded by Deutsche Forschungsgemeinschaft (DFG, German Research Foundation) under Germany's Excellence Strategy - EXC 2075 – 390740016. Both acknowledge support by the Stuttgart Center for Simulation Science (SimTech).}

\printbibliography

\end{document}